\documentclass[11pt]{amsart}
\usepackage[utf8]{inputenc}
\usepackage{graphicx}
\usepackage{amssymb}
\usepackage{amscd}
\usepackage{hyperref}
\usepackage{amsmath,amsthm}
\usepackage{latexsym}
\usepackage{caption}
\usepackage{color}
\usepackage{cite}

\newtheorem{thm}{Theorem}
\newtheorem{lem}[thm]{Lemma}
\newtheorem{cor}[thm]{Corollary}
\newtheorem{defn}{Definition}
\newtheorem{rem}{Remark}
\newtheorem{prop}[thm]{Proposition}
\newtheorem{exa}{Example}

\def\Z{\mathbb{Z}}

\newcommand{\tr}{\operatorname{tr}}

\begin{document}
\title[$D(N)$-quadruples in upper-triangular $2\times2$ integer matrices]{$D(N)$-quadruples in upper-triangular $2\times2$ integer matrices}
\author[Andrej Dujella, Zrinka Franu\v{s}i\'{c}]{Andrej Dujella, Zrinka Franu\v{s}i\'{c}}

\address{
University of Zagreb Faculty of Science, Department of Mathematics, 
Bijeni{\v c}ka cesta 30, 10000 Zagreb, Croatia}

\email{andrej.dujella@math.pmf.unizg.hr, zrinka.franusic@math.pmf.unizg.hr}

\keywords{Diophantine $m$-tuples, Jordan product, upper-triangular
integer matrices, difference of two squares, Diophantine
equations}

\subjclass{Primary 11C20, 11D99; Secondary 15B36, 17C50} 


\begin{abstract}
We introduce analogues of Diophantine $D(N)$-$m$-tuples in the noncommutative ring $M_2(\mathbb Z)$ of $2\times2$ integer matrices. Besides definitions based on the standard matrix product, we consider a symmetric version defined via the Jordan product
$$A\circ B=\frac12(AB+BA).$$
Special attention is devoted to upper-triangular integer matrices $UT_2(\mathbb Z)$, where squares admit a particularly simple description. Motivated by the classical connection between representations of $n$ as a difference of two squares and the existence of $D(n)$-quadruples in commutative rings, we investigate the existence of Jordan $D(N)$-quadruples in $UT_2(\mathbb Z)$.
\end{abstract}

\maketitle

\section{Introduction and motivation}
As the supplement \cite{duje-open} to the book \cite{duje-ell-cu}, a list of open problems was presented. In this paper, we address the specific problem formulated as follows:

\textit{Define an analogue of a $D(n)$-$m$-tuple in a noncommutative ring and prove some non-trivial results concerning the existence of $D(n)$-quadruples, e.g. in the ring of $2 \times 2$ matrices with integer entries.}

Recall that \textbf{$D(n)$-$m$-tuples} are traditionally defined in a commutative ring $\mathcal R$ with  unity as a set of $m$ distinct, nonzero elements $\{a_{1}, \ldots ,a_{m}\}$ in  $\mathcal R$  such that the product of any two distinct elements, increased by $n$,  is a perfect square in $\mathcal R$, i.e. $a_ia_j+n=x_{ij}^2$ for all $1\leq
i<j\leq m$, where $n\in\mathcal R\backslash \{0\}$. If equal elements and zero are allowed, {\color{black} we refer to such a family (or $m$-tuple) as having the \textbf{$D(n)$-property}. Also, we allow $n=0$ when referring only to the $D(n)$-property.}

So far $D(n)$-$m$-tuples have been extensively investigated in various rings of integers of number fields, such as $\Z,\Z[\sqrt{d}],\Z[(1+\sqrt{d})/2]$, as well as in rings of polynomials with integer coefficients. Interestingly, in many of these rings, an equivalence has been established between the representability of $n$ as a difference of two squares and the existence of a $D(n)$-quadruple (see \cite{duje-acta93,duje-glas97,zr-comm,zr-hung,zr-rama,zr-mis,zr-borka,zr-soldo,az,ljerka}). Yet, this ``rule'' is not without its counterexamples. In the rings of integers of certain quadratic fields, there exist elements $n$ that cannot be represented as a difference of two squares, yet $D(n)$-quadruples still exist (\!\!\cite{cgh,cgh2,gup}). 

A classical construction shows that every $D(n)$-pair $\{a,b\}$ satisfying $ab+n=r^2$ can be extended to a \textbf{regular  $D(n)$-triple} 
$$\{a, b, c_{\pm}\},~~  c_{\pm}=a+b\pm  2r $$
provided that the chosen element $c_{\pm}$
does not belong to $\{0,a,b\}$. Following this construction, one can consider sets of the form:
 $$\{a, b, a + b \pm 2r, a + 4b \pm 4r\}.$$
 These sets are $D(n)$-quadruples if the condition $a(a + 4b + 4r)+n=\square$ is satisfied and if the elements are nonzero and distinct. Building on this approach, Dujella \cite{duje-graz96} derived several parametric formulas for $D(n)$-quadruples. For instance, the set
 \begin{equation}\label{poli.formula}
     \{{\ell, \ell(3k + 1)^2 + 2k, \ell(3k + 2)^2 + 2k + 2, 9\ell(2k + 1)^2 + 8k + 4}\}
 \end{equation}
is shown to be a $D(n)$-quadruple with $n = 2\ell(2k + 1) + 1$, assuming the elements are mutually distinct and nonzero. These polynomial families illustrate that for infinitely many values of $n$, $D(n)$-quadruples exist in abundance within commutative rings.

 Furthermore, an important “scaling property” of these sets is as follows: if the set $\{a_1, \dots, a_m\}$ is a $D(n)$-$m$-tuple in a commutative integral domain $\mathcal{R}$, then for any $w \in \mathcal{R} \setminus \{0\}$, the set$$\{a_1w, \dots, a_mw\} \text{ is a } D(nw^2)\text{-}m\text{-tuple}.$$

 The aim of the present paper is to initiate the study of Diophantine $D(N)$-tuples in a genuinely noncommutative setting. We introduce several natural analogues of $D(N)$-tuples in $M_2(\Z)$, with particular emphasis on the symmetric definition induced by the Jordan product. Our main results concern the ring $UT_2(\Z)$ of upper-triangular $2\times2$ integer matrices. Using the characterization of upper-triangular matrices representable as differences of two squares obtained in \cite{diz2}, we prove that every matrix $N\in UT_2(\Z)$ representable as a difference of two squares in $UT_2(\Z)$ admits infinitely many Jordan $D(N)$-quadruples (Theorem \ref{thm:quadruple_for_diff_sq} and Corollary \ref{cor:ex-of-quadruples}). Thus, to the best of our knowledge, $UT_2(\Z)$ provides the first structure in which the implication
$$N=K^2-A^2 \quad \Longrightarrow \quad
\text{there exists a Jordan }D(N)\text{-quadruple}$$
can be established by combining a structural characterization of matrices representable as differences of two squares with an explicit construction of Jordan $D(N)$-quadruples.
 
 This approach differs substantially from earlier investigations of Diophantine quadruples in rings of algebraic integers (see \cite{zr-comm,zr-rama,zr-hung,zr-mis,zr-soldo,zr-borka}), where existence was typically obtained through explicit polynomial constructions (using  \eqref{poli.formula}, for example). We also investigate matrices $N$ that are not representable as differences of two squares. In several infinite families we prove that Jordan $D(N)$-quadruples do not exist by means of modular obstructions (Propositions \ref{prop:diag-2mod4}, \ref{prop:4k3-4n3-odd}, \ref{prop:8k1-8n5-odd}, \ref{prop:8k5-8n5-odd}, 
 \ref{prop:16k12-16n12-m},  \ref{prop:16k4-odd-16m12} and \ref{prop:16k8-odd-16m8}), while Propositions \ref{prop:quadruple-1m1}, \ref{prop:quintuple}, \ref{prop:s2-m-t2} and \ref{prop:4-2t-4} provide explicit constructions of Jordan $D(N)$-quadruples for matrices belonging to exceptional congruence classes. {\color{black} In particular, we show that if both diagonal entries of $N$ are integer squares, then a Jordan $D(N)$-quadruple exists for every upper-right entry (Proposition~\ref{prop:s2-m-t2}).} These results demonstrate that the relationship between difference-of-squares representations and the existence of Jordan $D(N)$-quadruples in matrix rings might be more subtle than in the classical commutative setting.

 \section{Matrix analogues of $D(N)$-$m$-tuples}

 Let $M_2(\mathbb{Z})$ be the ring of all $2 \times 2$ matrices with integer entries. It is well known that $M_2(\mathbb{Z})$ forms a noncommutative ring with unity $I$. Due to the lack of commutativity, the standard definition of a Diophantine $m$-tuple must be adapted. We propose two primary frameworks for defining a $D(N)$-$m$-tuple in this context, where $N \in M_2(\mathbb{Z})$.
 
 \subsection{Standard Product Definitions}
 
 The first approach considers the order of multiplication directly.
 \begin{defn}\label{defn:standard} Let $N \in M_2(\mathbb{Z})$.

 An ordered family $(A_1,  \ldots, A_m)$ of distinct nonzero
matrices in $M_2(\mathbb{Z})$
 is called:\begin{itemize}\item a \textbf{$D(N)$ matrix $m$-tuple in ascending order} if for all $1 \le i < j \le m$, there exists $X_{ij} \in M_2(\mathbb{Z})$ such that $A_iA_j + N = X_{ij}^2$,
 \item a \textbf{$D(N)$ matrix $m$-tuple in descending order} if for all $1 \le i < j \le m$, there exists $X_{ji} \in M_2(\mathbb{Z})$ such that $A_jA_i + N = X_{ji}^2$,\item a \textbf{$D(N)$ matrix $m$-tuple} if it is a $D(N)$ matrix $m$-tuple in both ascending and descending order.\end{itemize}\end{defn}
 Note that if an $m$-tuple $(A_1, \ldots, A_m)$ consists of matrices that pairwise commute, these three definitions collapse into the classical definition.

 \begin{exa}
     Let $a\in\Z$ and 
     $$  A=\begin{pmatrix}a + 1& a + 2\\
a &a + 1\end{pmatrix},~ B=\begin{pmatrix}-a - 1 &a + 1\\
a &-a\end{pmatrix},~ C=\begin{pmatrix} 0 &2a + 3\\
0 &-2a - 1\end{pmatrix}.$$
Note that 
$$ AB+I=\begin{pmatrix}0& 1\\
0&1 \end{pmatrix}^2,~ BA+I=\begin{pmatrix}a &a + 1\\
-a& -a - 1 \end{pmatrix}^2,$$
$$ AC+I=\begin{pmatrix}1&1\\0&0 \end{pmatrix}^2,~ CA+I=\begin{pmatrix}
(a + 1)(2a + 1)& 2a^2 + 5a + 3\\
-a(2a + 1)& -a(2a + 3)\end{pmatrix}^2,$$
$$ BC+I=\begin{pmatrix} 1 &-2(a + 1)\\
0& 2a + 1 \end{pmatrix}^2,~ CB+I=\frac{1}{4(1 + a)^2}\begin{pmatrix}2a^2 + 5a + 2& -a(2a + 3)\\ -a(2a + 1)& 2a^2 + 3a + 2\end{pmatrix}^2,$$
where $a\neq-1$.  For $a = -2, 0$  $$CB+I=\begin{pmatrix}
    0 &1\\3 &-2
\end{pmatrix}^2,\quad \begin{pmatrix}
    1 &0\\0 &1
\end{pmatrix},$$ respectively.

Thus, for $a \in \{-2, 0\}$, $(A, B, C)$ is a $D(I)$-triple in the sense of the standard product (in both ascending and descending orders), despite the fact that the matrices do not commute.
 \end{exa}
 
 \subsection{The Jordan Product Approach}
 As an alternative, we can utilize the symmetric part of the matrix product to restore commutativity at the operational level. We define the Jordan product of two matrices $A$ and $B$:
 $$A \circ B = \frac{1}{2}(AB + BA),$$
 see, for example, \cite{spring}. If $\mathbb F$ is a field of characteristic different from \(2\), then  $(M_2(\mathbb{F}), \circ)$ is a Jordan algebra. This operation is commutative but generally non-associative, satisfying the Jordan identity: $(A \circ B) \circ A^2 = A \circ (B \circ A^2)$. Using this operation, we can provide a definition that is inherently symmetric:
 
 \begin{defn}
 The set $\{A_1, A_2, \ldots, A_m\}$ of nonzero distinct matrices in $M_2(\mathbb{Z})$ is a \textbf{$D(N)$ matrix $m$-tuple under the Jordan product} if for all $1 \le i < j \le m$, there exists $X_{ij} \in M_2(\mathbb{Z})$ such that:
 \begin{equation}\label{jordan.property}
     A_i \circ A_j + N = X_{ij}^2.
 \end{equation}
 Throughout the paper we refer to these simply as \textbf{Jordan $D(N)$-$m$-tuples}.
 
A family of matrices $(A_1, A_2, \ldots, A_m)$ in $M_2(\mathbb{Z})$, whose elements need not be distinct {\color{black}or nonzero}, is said to have the \textbf{Jordan $D(N)$-property} if it satisfies \eqref{jordan.property} for all $1 \le i < j \le m$.
 \end{defn}

 Notice that the ordering is essential in Definition \ref{defn:standard}, where ascending and descending matrix $D(N)$-$m$-tuples are distinguished, whereas the Jordan $D(N)$-property is independent of the ordering of the matrices.

 Although the Jordan product ensures symmetry in the $D(N)$-$m$-tuple definition, a potential issue arises as $A \circ B$ may not have integer entries for all pairs $A, B \in M_2(\mathbb{Z})$.  Despite this constraint, the Jordan product offers a significant advantage regarding the extension of $D(N)$-tuples. In this setting, the classical extension formula for triples remains valid. If $\{A, B\}$ is a $D(N)$ pair such that $A \circ B + N = R^2$, then the set
 $$\{A, B, A + B \pm 2R\}$$
 has the Jordan $D(N)$-property. Indeed, bilinearity of the Jordan product allows for the following:
 $$A \circ (A + B \pm 2R) + N = A^2 + A \circ B \pm 2(A \circ R) + N = A^2 + R^2 \pm 2(A \circ R) = (A \pm R)^2.$$

 Unlike in commutative integral domains, the ring $M_2(\mathbb{Z})$ contains zero divisors which allow for the construction of $D(N)$-$m$-tuples of arbitrary length. 

 \begin{exa}\label{exa:nilpotent} 
 Let 
 $$A_i=\begin{pmatrix} 0 & k_i \\ 0 & 0 \end{pmatrix},\ k_i\in\Z.$$
 Note that $A_iA_j=A_jA_i=A_i\circ A_j=0$ (zero matrix), so $A_iA_j+N^2=N^2$ for any $N\in M_2(\Z)$. 
 {\color{black}Thus, by choosing pairwise distinct nonzero integers $k_i$, one obtains $D(N^2)$-$m$-tuples of \textbf{arbitrary length} with respect to both the standard and Jordan products.}
  \end{exa}

 Considering Example \ref{exa:nilpotent}, natural questions arise. How should such degenerate cases be excluded? Should one restrict attention to matrices with nonzero determinant, or require that the product (or Jordan product) of any two matrices in a $D(N)$-tuple be nonzero? 
 
 Indeed, Example \ref{exa:nilpotent} shows that the existence problem becomes trivial if no additional restrictions are imposed. Therefore, throughout the paper we distinguish such degenerate examples from non-degenerate constructions, in which all pairwise Jordan products are nonzero.

 \begin{defn}
     A Jordan $D(N)$-$m$-tuple is called \textbf{non-degenerate} if
$A_i\circ A_j\not=0$, for all $i\not =j$. Otherwise, it is called \textbf{degenerate}.
 \end{defn}

 \begin{lem}
     Let $\{A_1, \dots, A_m\}$ be a Jordan $D(N)$-tuple in $M_2(\Z)$. If $E \in GL_2(\mathbb{Z})$ is a unimodular matrix, then $\{EA_1E^{-1}, \ldots,EA_mE^{-1}\}$ is  a Jordan $D(ENE^{-1})$-$m$-tuple in $M_2(\Z)$.
 \end{lem}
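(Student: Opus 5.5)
The plan is to verify directly that conjugation by $E$ is an automorphism of $M_2(\Z)$ which is compatible with every ingredient of the definition of a Jordan $D(N)$-$m$-tuple. Write $\varphi(X)=EXE^{-1}$. Since $E\in GL_2(\Z)$, we have $\det E=\pm1$, so $E^{-1}=\pm\operatorname{adj}(E)$ has integer entries. Hence $\varphi$ maps $M_2(\Z)$ into itself, and its inverse $X\mapsto E^{-1}XE$ does as well. Thus $\varphi$ is a bijective ring automorphism of $M_2(\Z)$. It is additive and multiplicative, since $\varphi(X)\varphi(Y)=EXE^{-1}EYE^{-1}=\varphi(XY)$, and it is $\Z$-linear.

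Next, I will check that $\varphi$ preserves the Jordan product. Indeed,
$$\varphi(A)\circ\varphi(B)=\tfrac12\bigl(EABE^{-1}+EBAE^{-1}\bigr)=E\,(A\circ B)\,E^{-1}.$$
This holds even if $A\circ B$ has half-integer entries, but in our situation $A_i\circ A_j=X_{ij}^2-N$ is integral anyway. Applying $\varphi$ to the relation $A_i\circ A_j+N=X_{ij}^2$ gives
$$\varphi(A_i)\circ\varphi(A_j)+ENE^{-1}=E X_{ij}^2E^{-1}=(EX_{ij}E^{-1})^2,$$
and $EX_{ij}E^{-1}\in M_2(\Z)$ by the first step. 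So the conjugated family has the Jordan $D(ENE^{-1})$-property.

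Finally, injectivity of $\varphi$ gives the remaining conditions. The matrices $EA_iE^{-1}$ are pairwise distinct and nonzero, because $EA_iE^{-1}=EA_jE^{-1}$ forces $A_i=A_j$, and $EA_iE^{-1}=0$ forces $A_i=0$. I also note in passing that non-degeneracy is preserved, since $\varphi(A_i)\circ\varphi(A_j)=\varphi(A_i\circ A_j)\neq0$. There is no real obstacle here. The only point requiring care is the integrality of $E^{-1}$, which is exactly where unimodularity is used; for a general invertible $E$ over $\mathbb{Q}$ the conjugates need not lie in $M_2(\Z)$.
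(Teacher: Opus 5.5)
Your proof is correct and follows essentially the same route as the paper. Both arguments conjugate the relation $A_i\circ A_j+N=X_{ij}^2$ by $E$, use unimodularity to keep every matrix (including $EX_{ij}E^{-1}$) integral, and observe that conjugation preserves distinctness and nonzeroness. Your extra remark that non-degeneracy is also preserved is correct.
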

 \begin{proof}
     For every $1 \le i< j \le m$, we have $$A_i \circ A_j + N = X_{ij}^2, ~X_{ij}\in M_2(\Z). $$
     By multiplying the previous relation by $E$ and $ E^{-1}$, we get 
    $$  \frac12(E(A_iA_j+A_jA_i)E^{-1}) + ENE^{-1}=E X_{ij}^2E^{-1},$$
   $$  \frac12((EA_iE^{-1})(EA_jE^{-1})+(EA_jE^{-1})(EA_iE^{-1})) + ENE^{-1}=(E X_{ij}E^{-1})^2.$$ 
   Therefore, 
   $$ (EA_iE^{-1}) \circ (EA_jE^{-1}) + ENE^{-1} = (E X_{ij}E^{-1})^2 .$$
   Because $E$ and $E^{-1}$ are unimodular integer matrices, all these matrix products, including $(EA_iE^{-1}) \circ (EA_jE^{-1})$, have integer entries. Also, conjugation preserves distinctness and nonzeroness.
 \end{proof}

 \begin{exa}
    Conjugating the quadruple from Example \ref{exa.duje} by the unimodular  parametric matrix:
     $$E=\begin{pmatrix}
         1& k\\ m& 1 + km
     \end{pmatrix},~k,m\in\Z,$$
     we obtain a Jordan $D(N_m)$-quadruple for  $N_m=\begin{pmatrix}
         2 (2 - m)& 2\\-2 m^2& 2 (2 + m)
     \end{pmatrix}$:
     $$ \{\begin{pmatrix}
         -m& 1\\-m^2& m
     \end{pmatrix},\begin{pmatrix}
          -2 (1 + 6 k) m& 2 (1 + 6 k)\\-2 m (6 + m + 6 k m)& 2 (6 + m + 6 k m)    \end{pmatrix},$$ $$\begin{pmatrix}
              3 - 2 m + 2 k m& 2 (1 - k)\\2 m (1 - m + k m)& 1 + 2 m - 2 k m
          \end{pmatrix},\begin{pmatrix}
              7 - m + 2 k m& 1 - 2 k\\m (2 - m + 2 k m)& 5 + m - 2 k m
          \end{pmatrix}\}$$
 \end{exa}

 \section{Upper-triangular Jordan $D(N)$-quadruples}

 \subsection{The $UT_2(\mathbb{Z})$ setting}

 A natural setting in which to study the existence of matrix $D(N)$-$m$-tuples is the ring of upper-triangular $2\times2$ integer matrices of the form
$$T=\begin{pmatrix} a & b \\ 0 & c \end{pmatrix},~a,b,c\in\Z.$$
We denote this set by  $UT_2(\mathbb{Z})$. Note that $UT_2(\mathbb{Z})$ forms a ring under standard addition and matrix multiplication. 

The Jordan product is not closed in $UT_2(\mathbb{Z})$ since
$$ \begin{pmatrix} a & b \\ 0 & c \end{pmatrix}\circ \begin{pmatrix} d & e \\ 0 & f \end{pmatrix}=\begin{pmatrix} ad & \frac12(e(a+c)+b(d+f)) \\ 0 & cf \end{pmatrix}.$$
Observe that the structure $(UT_2(\mathbb{Q}),+,\circ)$ forms a commutative but non-associative algebra, known as a \textit{special Jordan algebra}. Despite this,  the set $UT_2(\mathbb{Z})$ is a convenient setting for constructing explicit examples of Jordan $D(N)$-$m$-tuples, because  squares in this ring have a very simple form:
\begin{equation}\label{kvadrat}
    T^2=\begin{pmatrix} a^2 & b(a+c) \\ 0 & c^2 \end{pmatrix}.
\end{equation}

From now on, $UT_2(\Z)$ is our ambient structure. In addition to the fact that the Jordan product need not belong to $UT_2(\Z)$, another distinction is important: an upper-triangular integer matrix that is a square in $M_2(\Z)$ need not have a square root in $UT_2(\Z)$. For instance, 
$$\begin{pmatrix}
    0& 2\\ 1 & 0\end{pmatrix}^2=2I\in UT_2(\Z).$$
whereas $2I$ has no square root in $UT_2(\Z)$. Therefore, in the upper-triangular setting, we require the square roots in the defining relations to belong to $UT_2(\Z)$. This motivates the following definition.
\begin{defn}
    Let $N \in UT_2(\Z)$. A family $\{A_1, \ldots ,A_m\} \subseteq UT_2(\Z)$ has the \textbf{upper-triangular Jordan $D(N)$-property} or \textbf{Jordan $D(N)$-property in $UT_2(\Z)$} if, for every $1 \le i < j \le m$, there exists $X_{ij} \in UT_2(\Z)$ such that \eqref{jordan.property} holds. 

    If the $A_i$ are distinct and nonzero, the family is called an \textbf{upper-triangular Jordan $D(N)$-$m$-tuple} or \textbf{Jordan $D(N)$-$m$-tuple in $UT_2(\Z)$}.
\end{defn}

Throughout Sections 3–5, all Jordan $D(N)$-properties and tuples
are understood in this upper-triangular sense.

 The following lemma shows that scaling all upper-right entries of a Jordan $D(N)$-quadruple by the same nonzero integer scales the upper-right entry of N by the same factor.

 \begin{lem}\label{lem:scale-m}
Let $N_r=\begin{pmatrix}u&r\\0&v\end{pmatrix}\in UT_2(\Z)$
and assume that $\{A_1,A_2,A_3,A_4\}$ is an upper-triangular Jordan $D(N_r)$-quadruple in $UT_2(\Z)$, where
$$A_i=\begin{pmatrix}a_i&b_i\\0&c_i\end{pmatrix},
\qquad 1\le i\le 4.$$
Then for every $m\in\Z\backslash\{0\}$, the matrices
$$A_i^{(m)}=\begin{pmatrix}a_i&m b_i\\0&c_i\end{pmatrix},
\qquad 1\le i\le 4$$
form an upper-triangular Jordan $D(N_{mr})$-quadruple for
$N_{mr}
=\begin{pmatrix}u&mr\\0&v\end{pmatrix}.$
\end{lem}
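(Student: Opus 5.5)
The natural idea is to view the operation $\begin{pmatrix}a&b\\0&c\end{pmatrix}\mapsto\begin{pmatrix}a&mb\\0&c\end{pmatrix}$ as a map $\phi_m$ on $UT_2(\mathbb{Q})$ and to check that it is compatible with the Jordan product, squaring and addition. If $\phi_m$ is additive and satisfies $\phi_m(X)\circ\phi_m(Y)=\phi_m(X\circ Y)$, then applying $\phi_m$ to each defining relation $A_i\circ A_j+N_r=X_{ij}^2$ gives the required relations for the $A_i^{(m)}$ with $N_{mr}=\phi_m(N_r)$.

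The first step is to observe that $\phi_m$ is conjugation by $D=\operatorname{diag}(m,1)$ on upper-triangular matrices:
$$D\begin{pmatrix}a&b\\0&c\end{pmatrix}D^{-1}=\begin{pmatrix}a&mb\\0&c\end{pmatrix}.$$
Conjugation by $D\in GL_2(\mathbb{Q})$ is a ring automorphism of $M_2(\mathbb{Q})$, so it preserves sums, products, Jordan products and squares. This is the same computation as in the proof of the earlier conjugation lemma. The only difference is that $D$ is not unimodular, so integrality has to be checked directly. Alternatively, the same conclusion follows by direct computation from the formula for the Jordan product in $UT_2(\mathbb{Z})$ and from \eqref{kvadrat}. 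The upper-right entry of $A_i\circ A_j$ is $\frac12\bigl(b_j(a_i+c_i)+b_i(a_j+c_j)\bigr)$, which is linear in $(b_i,b_j)$. The upper-right entry of $X^2$ is $x(\alpha+\gamma)$ for $X=\begin{pmatrix}\alpha&x\\0&\gamma\end{pmatrix}$, which is linear in $x$. Neither diagonal entry depends on the $b$'s. Hence multiplying all upper-right entries by $m$ multiplies both sides' upper-right entries by $m$.

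Next, integrality. The new square roots $X_{ij}^{(m)}=DX_{ij}D^{-1}$ have upper-right entry $m x_{ij}\in\mathbb{Z}$, so they lie in $UT_2(\mathbb{Z})$. The relation $A_i^{(m)}\circ A_j^{(m)}=X_{ij}^{(m)2}-N_{mr}$ then shows that the Jordan product is integral. Integrality of the Jordan product was not required by the definition anyway, since the original relation is an identity of rational matrices.

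Finally, the quadruple conditions need checking. Since $m\neq 0$, the map $\phi_m$ is injective, so distinct matrices stay distinct and nonzero matrices stay nonzero. No step here is a real obstacle. The only point needing care is the integrality of the new square roots and of $N_{mr}$, and both are immediate because scaling by an integer $m$ preserves integrality.
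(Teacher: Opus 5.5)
Your proof is correct. Your second argument is precisely the paper's proof: the diagonals are untouched, and the upper-right entries of $A_i\circ A_j$ and of $X^2$ are linear in the off-diagonal entries. The paper uses it to write down the new root $R_{ij}^{(m)}=\begin{pmatrix}r_{ij}&mz_{ij}\\0&t_{ij}\end{pmatrix}$ explicitly.

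Your primary route, viewing $A\mapsto A^{(m)}$ as conjugation by $D=\operatorname{diag}(m,1)\in GL_2(\mathbb{Q})$, is a genuinely more conceptual packaging. It makes the lemma an instance of the same computation as the earlier conjugation lemma, with a rational rather than unimodular conjugator. It also isolates the one place where non-unimodularity matters, namely integrality. Integrality holds exactly because conjugation by $D$ maps $UT_2(\mathbb{Z})$ into itself, even though it does not preserve $M_2(\mathbb{Z})$, since it divides a lower-left entry by $m$. In addition, it gives distinctness and nonzeroness of the $A_i^{(m)}$ for free by injectivity, a point the paper's proof leaves implicit.

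The paper's entrywise computation, in turn, is fully self-contained and never passes through $M_2(\mathbb{Q})$.
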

\begin{proof}
Assume that 
$$A_i\circ A_j+N_r=R_{ij}^2,\qquad R_{ij}=\begin{pmatrix}r_{ij}&z_{ij}\\0&t_{ij}\end{pmatrix},$$
for each pair $1\le i< j\le 4$. Since 
$$ [A_i^{(m)}\circ A_j^{(m)}+N_{mr}]_{11}=r_{ij}^2,\ \  [A_i^{(m)}\circ A_j^{(m)}+N_{mr}]_{22}=t_{ij}^2$$
and
$$ [A_i^{(m)}\circ A_j^{(m)}+N_{mr}]_{12}=m[A_i\circ A_j+N_r]_{12}=mz_{ij}(r_{ij}+t_{ij}),$$
i.e. since the upper-right entry in the Jordan product and in the square of an upper-triangular matrix depends linearly on the upper-right entries when the diagonals are fixed, we obtain
$$ A_i^{(m)}\circ A_j^{(m)}+N_{mr}=\left(R_{ij}^{(m)}\right)^2,\qquad R_{ij}^{(m)}=\begin{pmatrix}r_{ij}&m z_{ij}\\0&t_{ij}\end{pmatrix}.$$
 Hence $\{A_1^{(m)},A_2^{(m)},A_3^{(m)},A_4^{(m)}\}$ is a Jordan $D(N_{mr})$-quadruple.
\end{proof}

 \begin{exa}\label{exa.duje}
     Let $N=\begin{pmatrix}
         4&2\\0&4
     \end{pmatrix}$.  The set
     $$ \{A=\begin{pmatrix}
         0&1\\0&0
     \end{pmatrix}, B=\begin{pmatrix}
         0&2\\0&12
     \end{pmatrix},C=\begin{pmatrix}
         3&2\\0&1
     \end{pmatrix}, D=\begin{pmatrix}
         7&1\\0&5
     \end{pmatrix} \}$$
     is a Jordan $D(N)$-quadruple. Indeed,
     $$ A\circ B+N=\begin{pmatrix}
         2&2\\0&2
     \end{pmatrix}^2, A\circ C+N=\begin{pmatrix}
        2 &1\\0&2
     \end{pmatrix}^2, 
     A\circ D+N=\begin{pmatrix}
         2&2\\0&2
     \end{pmatrix}^2,$$ 
     $$B\circ C+N=\begin{pmatrix}
         2&3\\0&4
     \end{pmatrix}^2,
     B\circ D+N=\begin{pmatrix}
        2&2\\0&8
     \end{pmatrix}^2,
     C\circ D+N=\begin{pmatrix}
         5&2\\0&3
     \end{pmatrix}^2.$$
     Although $N$ is not a difference of two squares in $UT_2(\Z)$ (according to \cite{diz2}), it has several such representations in $M_2(\Z)$:
     
      {\small
     $ N=\begin{pmatrix}
         0 & 2 \\
 2 & 1  \end{pmatrix}^2- \begin{pmatrix}
      0 & 0 \\
 2 & 1     \end{pmatrix}^2=\begin{pmatrix}
       0 & 3 \\
 2 & 1  \end{pmatrix}^2-\begin{pmatrix}
   0 & 1 \\
 2 & 1      \end{pmatrix}^2=\begin{pmatrix}
     1 & 2 \\
 2 & 0
 \end{pmatrix}^2-\begin{pmatrix}
     1 & 0 \\
 2 & 0
 \end{pmatrix}^2$

 $=\begin{pmatrix}
     1 & 3 \\
 2 & 0
 \end{pmatrix}^2-\begin{pmatrix}
     1 & 1 \\
 2 & 0
 \end{pmatrix}^2=\begin{pmatrix}
     3 & 1 \\
 2 & 3
 \end{pmatrix}^2-\begin{pmatrix}
     2 & 1 \\
 3 & 2
 \end{pmatrix}^2.$
 }
 \end{exa}

 This example shows that the existence of Jordan $D(N)$-quadruples does not imply representability of $N$ as a difference of two squares in $UT_2(\Z)$. 

\subsection{Matrices representable as differences of two squares}
In many commutative rings, it has been shown that a $D(n)$-quadruple exists if and only if $n$ can be represented as a difference of two squares, up to a finite number of exceptions. Although we do not expect such an equivalence in matrix structures, we believe that it makes sense to explore the connection between these two concepts.
In \cite{diz2} we gave a complete characterization of upper-triangular matrices that can be represented as a difference of two squares in terms of congruences modulo 16.

We first show that every difference-of-squares representation naturally yields a Jordan $D(N)$-quadruple. Let us start with a  \textit{degenerate construction} of a Jordan $D(N)$-quadruple.

 \begin{lem}[Degenerate construction]
 Let $K,A_0\in M_2(\Z)$ and $N=K^2-A_0^2$. The family
 $$(A_0,A_0,2A_0+2K,5A_0+4K)$$
 has the Jordan $D(N)$-property.    
 \end{lem}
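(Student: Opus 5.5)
The plan is a direct verification, using only bilinearity and commutativity of the Jordan product, the identity $X\circ X=X^2$, and the expansion $(X+Y)^2=X^2+2(X\circ Y)+Y^2$, valid in $M_2(\Z)$ since $2(X\circ Y)=XY+YX$. Write $A=A_0$ and substitute $N=K^2-A^2$ into each of the six pairwise conditions. Since the family has repeated entries, the six pairs collapse to four distinct computations. The goal in each case is to recognise the result as the square of an integer combination of $A$ and $K$.

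The computations, in order, are as follows.

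\textbf{Pair $(1,2)$.} We get $A\circ A+N=A^2+K^2-A^2=K^2$.

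\textbf{Pairs $(1,3)$ and $(2,3)$.} We get
$$A\circ(2A+2K)+N=2A^2+2(A\circ K)+K^2-A^2=A^2+2(A\circ K)+K^2=(A+K)^2.$$
This is exactly the regular extension mechanism noted after the definition of Jordan $D(N)$-$m$-tuples, with $R=K$ and $B=A$.

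\textbf{Pairs $(1,4)$ and $(2,4)$.} We get
$$A\circ(5A+4K)+N=4A^2+4(A\circ K)+K^2=(2A+K)^2.$$

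\textbf{Pair $(3,4)$.} Expanding,
$$(2A+2K)\circ(5A+4K)=10A^2+18(A\circ K)+8K^2.$$
Adding $K^2-A^2$ gives $9A^2+18(A\circ K)+9K^2=(3A+3K)^2$.

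Two integrality checks remain, and they are the only delicate points. First, every Jordan product appearing here must have integer entries. This holds because each one equals an integer combination of $A^2$, $K^2$ and $2(A\circ K)=AK+KA$; for example $A\circ(5A+4K)=5A^2+2(AK+KA)$. Second, the square roots $K$, $A+K$, $2A+K$ and $3A+3K$ lie in $M_2(\Z)$, and in $UT_2(\Z)$ whenever $A_0,K\in UT_2(\Z)$.

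The main obstacle, to the extent there is one, is the last pair. There the coefficients $10$, $18$, $8$ must combine with $N$ to give a perfect square, and this is precisely what motivates the choice of the fourth element $5A_0+4K$. The property is claimed only in the degenerate sense (the first two entries coincide), so no distinctness or nonzeroness needs to be checked.
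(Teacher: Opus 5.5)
Your proposal is correct and is essentially the paper's proof: both are direct verifications by bilinearity of the Jordan product, and all four of your identities check out. The only difference is packaging. The paper treats the construction as two successive regular extensions: the pair $(A_0,A_0)$ with root $K$ yields $2A_0+2K$, and the pair $(A_0,2A_0+2K)$ with root $A_0+K$ yields $5A_0+4K=A_0+(2A_0+2K)+2(A_0+K)$. In that framing the pairs $(1,4)$ and $(3,4)$ follow from the extension identity without expanding the coefficients $10,18,8$, which also explains the choice of the fourth element.
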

\begin{proof}
The following two families $(A_0,A_0,2A_0+2K)$ and $(A_0,2A_0+2K,5A_0+4K)$ arise from the regular extension identities and have the $D(N)$-property. Indeed, 
    $$ A_0\circ A_0+N=K^2~ \text{ and } ~ A_0+A_0+2K=2A_0+2K,$$
   $$ A_0\circ(2A_0+2K)+N=2A_0^2+2A_0\circ K+K^2-A_0^2=(A_0+K)^2 $$
   and 
   $$A_0+(2A_0+2K)+2(A_0+K)=5A_0+4K.$$
\end{proof}

We next show how a degenerate quadruple arising from a difference of squares representation can be converted into infinitely many proper Jordan
$D(N)$-quadruples.
 This result requires the following technical lemma. 
\begin{lem}\label{lem:tech}
    If $N\in UT_2(\Z)$ can be represented as a difference of two squares in $UT_2(\Z)$, then there exist $A_0,K\in  UT_2(\Z)$ such that 
    \begin{itemize}
        \item[(i)] $N=K^2-A_0^2$,
        \item[(ii)] $3A_0+2K\not=0$, $A_0+K\not=0$ and $5A_0+4K\not=0$,
        \item[(iii)] either $\tr(2A_0+K)\not=0$, or $\tr(A_0)=\tr(K)=0$.
    \end{itemize}
    \end{lem}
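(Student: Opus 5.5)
The plan is to start from an arbitrary representation $N=K^2-A_0^2$ with $A_0,K\in UT_2(\Z)$ and modify it by sign changes. Replacing $K$ by $-K$ or $A_0$ by $-A_0$ does not change $N$, since $(-K)^2=K^2$. The three conditions in (ii) all say that the pair avoids a line through the origin: $K\neq\lambda A_0$ with $\lambda\in\{-1,-\tfrac32,-\tfrac54\}$. Condition (iii) is a linear condition on traces. So the whole proof should reduce to a short case analysis on $\tr(A_0)$, $\tr(K)$, and on whether $K$ is a scalar multiple of $A_0$.

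First I dispose of $N=0$. Take $A_0=K=\begin{pmatrix}1&0\\0&0\end{pmatrix}$. Then $K^2-A_0^2=0$, the three matrices in (ii) are $5A_0$, $2A_0$ and $9A_0$, all nonzero, and $\tr(2A_0+K)=3\neq0$. From now on fix a representation with $(A_0,K)\neq(0,0)$; this is possible since $N\neq0$. There are three cases.

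\textbf{Case 1: $\tr(A_0)=\tr(K)=0$.} Condition (iii) holds automatically, and it still holds after replacing $K$ by $-K$. Suppose (ii) fails for both $(A_0,K)$ and $(A_0,-K)$. Then $K=\lambda A_0$ and $-K=\mu A_0$ with $\lambda,\mu\in\{-1,-\tfrac32,-\tfrac54\}$. Adding gives $(\lambda+\mu)A_0=0$. Since $\lambda+\mu<0$, this forces $A_0=0$ and hence $K=0$, a contradiction. So one of the two choices works.

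\textbf{Case 2: $\tr(2A_0+K)=0$ but not both traces vanish.} Then $\tr K=-2\tr A_0$, so $\tr A_0\neq0$. Replace $K$ by $-K$. Now $\tr(2A_0-K)=4\tr A_0\neq0$, so (iii) holds. If (ii) failed for $(A_0,-K)$, we would have $-K=\lambda A_0$ with $\lambda$ in the bad set. Taking traces gives $2\tr A_0=\lambda\tr A_0$, i.e.\ $\lambda=2$, which is not in the bad set. So (ii) holds.

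\textbf{Case 3: $\tr(2A_0+K)\neq0$.} If (ii) holds we are done. Otherwise $K=\lambda A_0$ with $\lambda\in\{-1,-\tfrac32,-\tfrac54\}$, and $A_0\neq0$. Then $\tr(2A_0+K)=(2+\lambda)\tr A_0\neq0$, so $\tr A_0\neq0$. Pass to $(A_0,-K)=(A_0,-\lambda A_0)$. The three matrices in (ii) become $(3-2\lambda)A_0$, $(1-\lambda)A_0$ and $(5-4\lambda)A_0$, all nonzero because these coefficients are positive. The trace $\tr(2A_0-K)=(2-\lambda)\tr A_0$ is also nonzero.

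The main obstacle is to notice that flipping the sign of $K$ alone gives enough freedom. Flipping both signs at once changes nothing, so the real work is the trace bookkeeping in Case 2, which shows that a single sign flip cannot destroy (ii) there. The cases are exhaustive, which completes the plan.
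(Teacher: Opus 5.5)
Your proof is correct and follows essentially the same idea as the paper: use the sign freedom in $N=K^2-A_0^2$ and check that one sign flip repairs any failure of (ii) or (iii). The differences are minor: you flip $K$ where the paper flips $A_0$, and you split into cases by traces (with (ii) recast as $K\neq\lambda A_0$, $\lambda\in\{-1,-\tfrac32,-\tfrac54\}$) rather than by which obstruction occurs, so each check becomes a one-line coefficient or trace comparison and no separate $A_0=0$ case is needed.
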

\begin{proof}
Assume first that $N=0$. Then we may take $A_0=K=I$, so all three conditions are clearly satisfied.

   Now let $N\not=0$, and suppose that $N=K^2-A_0^2$ for some $A_0,K\in  UT_2(\Z)$. Note that $A_0+K\neq0$ and $-A_0+K\neq0$,
since either equality would imply $K^2=A_0^2$ and hence $N=0$.
We distinguish two cases: $A_0=0$ and $A_0\not =0$.

\textbf{Case} $\boxed{A_0=0}$: Note that $K\not=0$, since $N\not=0$. Hence, $3A_0+2K\not=0$ and $5A_0+4K\not=0$. Moreover, either  $\tr K\not=0$, in which case $\tr(2A_0+K)\not=0$, or $\tr K=0$, in which case $\tr(A_0)=\tr(K)=0$. 

\textbf{Case} $\boxed{A_0\not=0}$: We analyze the possible obstructions to conditions (ii) and (iii) and, if necessary, fix them by replacing $A_0$ by $-A_0$.
    \begin{itemize}  
        \item[$\rhd$]   If $3A_0+2K=0$, then $-3A_0+2K\not=0$, since if both equalities hold, then $A_0=K=0$. In addition, $-5A_0+4K\not=0$, because  $3A_0+2K=0$ and $-5A_0+4K=0$ yield $A_0=K=0$.  Hence after  replacing $A_0$ by $-A_0$, conditions (i) and (ii) are satisfied.

        If $\tr(-2A_0+K)=0$, then together with $3A_0+2K=0$ it follows that $\tr(-A_0)=\tr(K)=0$. Hence, by replacing $A_0$ by $-A_0$, we meet all three conditions. 
  \item[$\rhd$]   If $5A_0+4K=0$, by an analogous argument as in the previous case, all three conditions are satisfied by replacing $A_0$ by $-A_0$.
 \item[$\rhd$]   If $\tr(2A_0+K)=0$ and $\tr A_0\not =0$, then  $\tr K\not=0$. Note that $\tr(-2A_0+K)\not =0$, since otherwise both $\tr(2A_0+K)=0$ and  $\tr(-2A_0+K) =0$  would hold, implying $\tr A_0=\tr K=0$. Also, if $-3A_0+2K=0$, then $\tr(2A_0+K)=0$ implies $\tr A_0=\tr K=0$. Hence, $-3A_0+2K\not =0$. By the same argument we get $-5A_0+4K\not=0$. Therefore, replacing $A_0$ by $-A_0$ satisfies both (ii) and (iii).

\end{itemize}
\end{proof}

 \begin{thm}[Prototype quadruple]\label{thm:quadruple_for_diff_sq}
     Let $A_0,K\in UT_2(\Z)$ be chosen as in Lemma~\ref{lem:tech}, and let 
     $$N=K^2-A_0^2, \qquad  U_t=\begin{pmatrix}0&t\\0&0\end{pmatrix},\ t\in\Z.$$
      Then there exist infinitely many $t\in\Z\backslash\{0\}$ such that 
      \begin{equation} \label{proto_quad}
          \{A_0+U_t,A_0-U_t,2A_0+2K,5A_0+4K\}
      \end{equation}
     is a Jordan $D(N)$-quadruple in $ UT_2(\Z)$.
 \end{thm}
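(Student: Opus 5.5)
We verify the six pairwise conditions for the family \eqref{proto_quad} directly. We then show that integrality, distinctness and nonvanishing fail for only finitely many $t$, apart from a divisibility condition on $t$ that infinitely many $t$ satisfy. Write $E_{12}=\begin{pmatrix}0&1\\0&0\end{pmatrix}$, so that $U_t=tE_{12}$.

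Two elementary facts drive everything. First, for any $X\in UT_2(\Z)$,
$$U_t\circ X=\tfrac{t}{2}\,\tr(X)\,E_{12}.$$
Second, by \eqref{kvadrat}, for any $Y\in UT_2(\Z)$ and $y\in\Z$,
$$(Y+yE_{12})^2=Y^2+y\,\tr(Y)\,E_{12}.$$
Also $U_t^2=0$, and the Jordan product is bilinear and symmetric.

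With $A_1=A_0+U_t$, $A_2=A_0-U_t$, $A_3=2A_0+2K$ and $A_4=5A_0+4K$, we get the following.
\begin{itemize}
\item $A_1\circ A_2+N=A_0^2-U_t^2+N=K^2$.
\item $A_{1,2}\circ A_3+N=(A_0+K)^2\pm t\,\tr(A_0+K)E_{12}=(A_0+K\pm U_t)^2$, using the degenerate construction and the second fact.
\item $A_3\circ A_4+N=9A_0^2+18A_0\circ K+9K^2=(3A_0+3K)^2$.
\item $A_{1,2}\circ A_4+N=(2A_0+K)^2\pm\tfrac t2\tr(5A_0+4K)E_{12}$, since $A_0\circ(5A_0+4K)+K^2-A_0^2=(2A_0+K)^2$.
\end{itemize}

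The last pair is the only step needing care. We need $y\in\Z$ with $y\,\tr(2A_0+K)=\tfrac t2\tr(5A_0+4K)$, and $t\,\tr(5A_0+4K)/2\in\Z$ so that the Jordan product is integral. This is where Lemma~\ref{lem:tech}(iii) enters. If $\tr A_0=\tr K=0$, the correction term vanishes and $y=0$ works for every $t$. Otherwise $\tr(2A_0+K)\neq0$, and we restrict to $t\in 2\,\tr(2A_0+K)\,\Z$, which gives an integral $y$. Integrality of the other Jordan products is automatic, since there $\tfrac t2\tr(2A_0+2K)=t\tr(A_0+K)$ and $U_t\circ U_t=0$. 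So in either case, all $t$ in an infinite arithmetic progression (or all $t$) satisfy the six conditions, with square roots in $UT_2(\Z)$.

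Finally we check that the four matrices are nonzero and distinct.
\begin{itemize}
\item $A_3=2(A_0+K)\neq0$ and $A_4\neq0$ by Lemma~\ref{lem:tech}(ii).
\item $A_3\neq A_4$, since equality means $3A_0+2K=0$, which (ii) excludes.
\item $A_1\neq A_2$ because $t\neq0$.
\end{itemize}
Each remaining coincidence excludes at most one value of $t$: $A_{1,2}=0$ forces $A_0=\mp U_t$; $A_{1,2}=A_3$ forces $A_0+2K=\mp U_t$; and $A_{1,2}=A_4$ forces $4A_0+4K=\mp U_t$. Removing these finitely many values from the infinite progression leaves infinitely many admissible $t$, which proves the theorem. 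I expect no real obstacle beyond organizing the $A_{1,2}\circ A_4$ case via Lemma~\ref{lem:tech}(iii).
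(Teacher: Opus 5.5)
Your proof is correct and follows essentially the same route as the paper: the same six identities, the same restriction $t\in 2\tr(2A_0+K)\Z$ (your root $2A_0+K\pm\alpha\tr(5A_0+4K)E_{12}$ is exactly the paper's $S_\pm\pm\gamma E$, since $t+\gamma=\alpha\tr(5A_0+4K)$), and the same exclusion of finitely many $t$ for coincidences and zero matrices. One minor slip, which does not affect the argument: $A_{1,2}=A_3$ and $A_{1,2}=A_4$ force $A_0+2K=\pm U_t$ and $4A_0+4K=\pm U_t$, not $\mp U_t$.
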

 \begin{proof} First note that $(2A_0+2K)\circ(5A_0+4K)+N=\square$, since $(A_0,2A_0+2K,5A_0+4K)$ has the Jordan $D(N)$-property. Moreover, $U_t$ is a nilpotent matrix, i.e. $U_t^2=0$. Thus, it remains to verify the five remaining pairwise Jordan products.
 \begin{enumerate}
     \item[(i)] $(A_0+U_t)\circ(A_0-U_t)+N=A_0^2+N=K^2$
     \item[(ii)] $(A_0+U_t)\circ(2A_0+2K)+N=(A_0+K+U_t)^2$
       \item[(iii)] $(A_0+U_t)\circ(5A_0+4K)+N=(2A_0+K+U_t)^2+\frac12\begin{pmatrix}
           0& t\cdot \operatorname{tr}( A_0+2K) \\ 0&0
       \end{pmatrix}$
      \item[(iv)] $(A_0-U_t)\circ(2A_0+2K)+N=(A_0+K-U_t)^2$
         \item[(v)] $(A_0-U_t)\circ(5A_0+4K)+N=(2A_0+K-U_t)^2-\frac12\begin{pmatrix}
           0& t\cdot \operatorname{tr} (A_0+2K) \\ 0&0
       \end{pmatrix}$
 \end{enumerate}
Identities (i), (ii), and (iv) follow immediately. It remains to consider
(iii) and (v), i.e. to find $t$ such that the matrices on the right-hand
sides are perfect squares. By Lemma~\ref{lem:tech}, either
$\tr(2A_0+K)\neq0$, or $\tr(A_0)=\tr(K)=0$.

If $\tr(A_0)=\tr(K)=0$, then also
$\tr(A_0+2K)=0$, so the correction terms in (iii) and (v) vanish.
Hence the matrices in (iii) and (v) are perfect squares for every
$t\neq0$.

Now consider the case $\tr(2A_0+K)\neq0$. Put
$$S_{\pm}=2A_0+K\pm U_t,\qquad E=\begin{pmatrix}0&1\\0&0\end{pmatrix}.$$
For the matrices appearing in (iii) and (v), respectively, we seek square
roots of the form
$$S_\pm\pm\gamma E,\qquad \gamma\in\mathbb Z.$$
Since $E^2=0$, we have
$$(S_\pm\pm\gamma E)^2=S_\pm^2\pm\gamma(S_\pm E+ES_\pm).$$
Therefore,
$$S_\pm^2\pm\frac{t}{2}\tr(A_0+2K)E=(S_\pm\pm\gamma E)^2$$
provided that
\begin{equation}\label{eq:t}
\frac{t}{2}\tr(A_0+2K)E=\gamma(S_\pm E+ES_\pm).
\end{equation}
Now,
$$S_\pm E+ES_\pm=\tr(S_\pm)E=\tr(2A_0+K\pm U_t)E=\tr(2A_0+K)E,$$
since $\tr U_t=0$. Hence, for
$$t=2\alpha\tr(2A_0+K),\qquad\alpha\in\mathbb Z\setminus\{0\},$$
and
$$\gamma=\alpha\tr(A_0+2K),$$
equation~\eqref{eq:t} holds. Consequently, the matrices in (iii) and
(v) are perfect squares.
\medskip

It remains to verify that the four matrices in \eqref{proto_quad} are distinct. Denote the four matrices in \eqref{proto_quad} by $B_1,\ldots,B_4$. 
\begin{itemize}
    \item $B_1=B_2$ $\iff$ $U_t=0$, but $U_t\not=0$ since $t\not=0$.
\item $B_1=B_3$ $\iff$ $U_t=A_0+2K$, but the identity $U_t=A_0+2K$ is possible for at most one value of $t$.\\
The same argument applies to the following three cases:
\item $B_1=B_4$ $\iff$ $U_t=4A_0+4K$,
\item $B_2=B_3$ $\iff$ $U_t=-A_0-2K$,
\item $B_2=B_4$ $\iff$ $U_t=-4A_0-4K$.
\item $B_3=B_4$ $\iff$ $3A_0+2K=0$, but $3A_0+2K\not =0$ by Lemma~\ref{lem:tech}. 
\end{itemize} 
Hence only finitely many values of $t$ must be excluded. Since the construction provides infinitely many admissible values of $t$, infinitely many remain for which the four matrices are pairwise distinct.

Moreover, after excluding finitely many values of $t$, we also have
$B_i\neq \mathbf 0$ for all $i=1,2,3,4$.
Indeed, by Lemma~\ref{lem:tech}, both
$2A_0+2K$ and $5A_0+4K$ are nonzero. The equalities
$A_0+U_t=0$ and $A_0-U_t=0$ can occur for at most one value of $t$ each, so these
values can be excluded together with the finitely many values producing coincidences.
\end{proof}

\begin{rem}
  In the construction of Theorem~\ref{thm:quadruple_for_diff_sq}, suppose in addition that
$$A_0^2\not=0, \quad \tr(A_0+K)\not=0,\quad  \tr(5A_0+4K)\not=0, \quad (A_0+K)\circ(5A_0+4K)\not=0.$$
Then infinitely many of the constructed Jordan $D(N)$-quadruples are \textbf{non-degenerate}, i.e. all their pairwise Jordan products are nonzero. Indeed, the two fixed products are
$$ (A_0+U_t)\circ(A_0-U_t)=A_0^2$$
and
$$(2A_0+2K)\circ(5A_0+4K)=2(A_0+K)\circ(5A_0+4K)$$
which are nonzero by assumption. The remaining four pairwise Jordan products $B_1\circ B_3$, $B_2\circ B_3$, $B_1\circ B_4$ and $B_2\circ B_4$ depend linearly on $t$. Their upper-right entries are affine linear functions of $t$, with coefficients $\tr(A_0+K)$ and $\frac{1}{2}\tr(5A_0+4K)$, respectively. Since these coefficients are nonzero by assumption, each vanishing condition excludes at most one value of $t$. Excluding finitely many such values, infinitely many admissible values of $t$ remain.

\end{rem}

Although not every representation $N=K^2-A_0^2$
is suitable for the construction of Theorem~\ref{thm:quadruple_for_diff_sq}, Lemma~\ref{lem:tech} guarantees the existence of a favourable representation. Hence every matrix $N\in UT_2(\Z)$ representable as a difference of two squares gives rise to infinitely many Jordan $D(N)$-quadruples. Combining this with the characterization of upper-triangular $2\times 2$ integer matrices representable as differences of two squares, given in \cite{diz2}, we obtain the following corollary.

\begin{cor}\label{cor:ex-of-quadruples}
Let
$$N=\begin{pmatrix}p&r\\0&q\end{pmatrix}\in UT_2(\Z).$$
If one of the following holds:
\begin{enumerate}
\item $p$ and $q$ are odd, and
\begin{itemize}
\item  $p\equiv q\pmod4$ and $r\equiv0\pmod2$;
\item $p\not\equiv q\pmod4$ and $r$ is arbitrary;
\end{itemize}
\item one of $p,q$ is odd and the other is divisible by $4$, and $r$ is arbitrary;
\item $p\equiv q\equiv0\pmod4$, and
\begin{itemize}
\item $(p,q)\mod16\in\{(4,4),(12,12)\}$ and $r\equiv0\pmod4$;
\item $(p,q)\mod16\in\{(4,12),(8,8),(12,4)\}$ and $r\equiv0\pmod2$;
\item otherwise, $r$ is arbitrary,
\end{itemize}
\end{enumerate}
then there exist infinitely many Jordan $D(N)$-quadruples in $UT_2(\Z)$.
\end{cor}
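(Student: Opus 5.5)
The corollary follows by combining Theorem~\ref{thm:quadruple_for_diff_sq}, via Lemma~\ref{lem:tech}, with the characterization from \cite{diz2}. I would first invoke the main result of \cite{diz2}: a matrix $N=\begin{pmatrix}p&r\\0&q\end{pmatrix}\in UT_2(\Z)$ is a difference of two squares in $UT_2(\Z)$ exactly when $(p,q,r)$ satisfies one of the congruence conditions (1)--(3) listed in the statement. So under the hypotheses of the corollary there are $K,A_0\in UT_2(\Z)$ with $N=K^2-A_0^2$.

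Next I would apply Lemma~\ref{lem:tech}. It replaces this representation, if needed (for instance by changing the sign of $A_0$, or by taking $A_0=K=I$ when $N=0$), by one that additionally satisfies the nonvanishing conditions (ii) and the trace condition (iii). With such $A_0,K$ fixed, Theorem~\ref{thm:quadruple_for_diff_sq} gives infinitely many nonzero integers $t$ for which
\[
\{A_0+U_t,\ A_0-U_t,\ 2A_0+2K,\ 5A_0+4K\}
\]
is a Jordan $D(N)$-quadruple in $UT_2(\Z)$. Different admissible $t$ give different first elements $A_0+U_t$, so the resulting quadruples are pairwise distinct as sets, and there are infinitely many of them.

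The only step needing care is the translation of the \cite{diz2} characterization into the stated list. Here I would recheck that the three clauses (both diagonal entries odd; one odd and one divisible by $4$; both divisible by $4$ with the mod-$16$ subcases) match the published criterion. In particular, I would check that the excluded cases are precisely those with a diagonal entry $\equiv 2\pmod 4$, together with the listed parity and mod-$4$ restrictions on $r$. Once that match is confirmed, the rest is formal.
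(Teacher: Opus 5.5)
Your proposal is correct and follows the paper's route: the characterization in \cite{diz2} gives $N=K^2-A_0^2$ in $UT_2(\Z)$, Lemma~\ref{lem:tech} replaces this by a favourable representation, and Theorem~\ref{thm:quadruple_for_diff_sq} then produces the quadruples. One small slip is that $t$ and $-t$ (both admissible) give the same set, so the quadruples are not pairwise distinct as sets; however, $A_0+U_{t'}\in\{A_0\pm U_t,\,2A_0+2K,\,5A_0+4K\}$ allows at most four values of $t'$ per set, so infinitely many distinct quadruples still result.
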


The remaining congruence classes therefore correspond precisely to matrices that are not representable as differences of two squares in $UT_2(\Z)$. 

\section{Matrices not representable as differences of two squares}

According to Corollary~\ref{cor:ex-of-quadruples}, it remains to investigate matrices that are not representable as differences of two squares in $UT_2(\Z)$, i.e. for the following matrix forms:
\begin{enumerate}
    \item[(I)] $p\equiv 2\pmod 4$ or $q\equiv 2\pmod 4$,
      \item[(II)] $p,q$ are odd, $p\equiv q\pmod4$ and $r\equiv1\pmod2$,
      \item[(III)] $(p,q)\equiv(4,4),(12,12)\pmod{16}$, $r\equiv1,2,3\pmod4$,
       \item[(IV)] $(p,q)\equiv(4,12),(8,8),(12,4)\pmod{16}$, $r\equiv1\pmod2$.\\
\end{enumerate}

Throughout this section, assume that
\begin{equation}\label{quadruple}
A_i=\begin{pmatrix}a_i&b_i\\0&d_i\end{pmatrix}, \qquad i=1,\ldots,4,
\end{equation}
have the Jordan $D(N)$-property in $UT_2(\Z)$, where $N=
\begin{pmatrix}
p&r\\0&q \end{pmatrix}$. Then, for every $1\le i<j\le 4$,
\begin{equation}\label{aiaj}
    A_i\circ A_j+N=\begin{pmatrix}
a_ia_j+p &
b_ic_j+b_jc_i+r\\0& d_id_j+q
\end{pmatrix}
\end{equation}
is a square in $UT_2(\Z)$, where we set
\begin{equation}\label{ci}
    c_k=\frac{a_k+d_k}{2}=\frac12\tr A_k,\qquad k=1,2,3,4. 
\end{equation}

The proofs of the \underline{non-existence results} rely on the following three ingredients:

\begin{enumerate}
\item[(i)] \textbf{Scalar  $D(n)$-conditions}\\
The diagonal entries of matrices $A_i$ satisfy
$$ a_ia_j + p = \square,\qquad d_id_j + q = \square,\qquad i \neq j,$$
i.e., they have the scalar $D(p)$- and $D(q)$-properties.
Consequently, known congruence restrictions for integer quadruples with the $D(n)$-property impose parity conditions on the diagonal entries of matrices $A_i$.

\item[(ii)] \textbf{Integrality condition} \\
Since $A_i \circ A_j = X_{ij}^2-N$, the Jordan product of two upper-triangular integer matrices $A_i$ and $A_j$ has integer entries. Hence, its upper-right entry satisfies additional integrality (and consequently parity) constraints. 

\item[(iii)] \textbf{Square condition}\\
Finally, if $A_i\circ A_j+N$ is a square in $UT_2(\Z)$, formula~\eqref{aiaj} imposes conditions on the upper-right entry. {\color{black} By \eqref{kvadrat}, if the trace of a square root is nonzero, then the upper-right entry in \eqref{aiaj} is divisible by this trace; if the trace is zero, then this entry must be zero.}
\end{enumerate}

The interplay between these three ingredients yields the desired contradictions. In addition, the following technical lemma is often used.
\begin{lem}[$\Z_2^2$-argument]\label{lem:tech2}
    There do not exist integers $x_i$, $y_i$, $i=1,2,3,4$, such that 
    $$ x_iy_j+x_jy_i\equiv 1\pmod 2, \qquad 1\le i<j\le 4.$$
\end{lem}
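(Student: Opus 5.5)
We reduce everything modulo $2$ and regard each pair $v_i=(x_i,y_i)$ as a vector in $\Z_2^2$. The expression $x_iy_j+x_jy_i$ is then the symmetric bilinear form
$$\beta(v,w)=x_1y_2+x_2y_1 \pmod 2,\qquad v=(x_1,y_1),\ w=(x_2,y_2),$$
on $\Z_2^2$. The claim is that there are no four vectors $v_1,\dots,v_4\in\Z_2^2$ with $\beta(v_i,v_j)=1$ for all $i\neq j$. Since $\Z_2^2$ has only four elements, the whole proof is a small finite check, and we organise it so that it is transparent.

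First we compute $\beta$ on the four elements $0=(0,0)$, $e_1=(1,0)$, $e_2=(0,1)$, $f=(1,1)$. We have $\beta(0,\cdot)=0$, $\beta(e_1,e_1)=\beta(e_2,e_2)=0$, $\beta(f,f)=2\equiv0$, and
$$\beta(e_1,e_2)=1,\qquad \beta(e_1,f)=1,\qquad \beta(e_2,f)=1.$$
So $\beta(v,w)=1$ exactly when $v,w$ are distinct and both nonzero; equivalently, $\beta$ is the standard symplectic form on $\Z_2^2$, and $\beta(v,v)=0$ for every $v$.

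Now suppose $v_1,\dots,v_4$ satisfy the hypothesis. No $v_i$ can be $0$, since then $\beta(v_i,v_j)=0$ for every $j$. Likewise, two indices cannot carry the same vector, because $\beta(v,v)=0$. Hence $v_1,\dots,v_4$ would be four distinct nonzero elements of $\Z_2^2$, which has only three nonzero elements. This contradiction proves the lemma; by pigeonhole there is in fact no such family of length $4$, although length $3$ is possible via $e_1,e_2,f$.

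The only step requiring care is the diagonal identity $\beta(v,v)=2xy\equiv0$, which is what rules out repeated vectors. There is no real obstacle beyond this; the argument is a direct enumeration. In later applications the $x_i,y_i$ will presumably be diagonal data of the $A_i$, such as the $a_i,d_i$ or $c_i$, with the ingredients (i)–(iii) forcing $b_ic_j+b_jc_i$-type entries to be odd.
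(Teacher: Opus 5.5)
Your proof is correct and follows the paper's argument: you reduce modulo $2$, note that $x_iy_j+x_jy_i$ is the determinant (symplectic) form on $\Z_2^2$, which equals $1$ exactly for pairs of distinct nonzero vectors, and conclude by pigeonhole since $\Z_2^2$ has only three nonzero vectors. The only blemish is cosmetic: reusing $x_1,y_1,x_2,y_2$ as coordinates in the definition of $\beta$ clashes with the indexed integers $x_i,y_i$ of the statement.
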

\begin{proof}
    Let $v_i=(x_i,y_i)\mod 2\in\Z_2^2$, $i=1,2,3,4$. Then
$$x_i y_j+x_j y_i\equiv\det\begin{pmatrix}v_i\\v_j\end{pmatrix}\pmod2,$$
so the assumptions are equivalent to
$$\det\begin{pmatrix}v_i\\v_j\end{pmatrix}=1,\qquad1\le i<j\le4.$$
Over $\Z_2$, two vectors have determinant $1$ if and only if they are distinct and nonzero. Since $\Z_2^2$ contains only three nonzero vectors, four vectors satisfying the above condition cannot exist.
\end{proof}

\subsection{Case I} $p\equiv 2\pmod 4$ or $q\equiv 2\pmod 4$.

\begin{prop}\label{prop:diag-2mod4}
Let
$$N=\begin{pmatrix}p&r\\0&q\end{pmatrix}\in UT_2(\mathbb Z).$$
If $p\equiv 2\pmod 4$ or $q\equiv 2\pmod 4$, then there are no four matrices in $UT_2(\mathbb Z)$ having the Jordan $D(N)$-property.
Consequently, there is no upper-triangular Jordan $D(N)$-quadruple.
\end{prop}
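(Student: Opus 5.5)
By symmetry it suffices to treat $p\equiv 2\pmod 4$; the case $q\equiv2\pmod4$ is identical with the $d_i$ in place of the $a_i$. The proof only needs ingredient (i), the scalar $D(n)$-condition on the diagonal. Suppose $A_1,\dots,A_4$ as in \eqref{quadruple} have the Jordan $D(N)$-property in $UT_2(\Z)$. By \eqref{aiaj}, the upper-left entries satisfy
$$a_ia_j+p=x_{ij}^2 \quad (1\le i<j\le 4),\qquad x_{ij}\in\Z.$$
So $(a_1,a_2,a_3,a_4)$ is an integer quadruple with the scalar $D(p)$-property. Equal and zero entries are allowed here, since the $A_i$ may differ only in other entries.

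The key step is the classical parity argument for $n\equiv2\pmod 4$, in the version that allows equal or zero elements. Squares are $0$ or $1\pmod 4$. If $a_ia_j\equiv 0\pmod 4$, then $a_ia_j+p\equiv2\pmod4$, which is impossible. If $a_ia_j\equiv 1\pmod 4$, then $a_ia_j+p\equiv3\pmod 4$, also impossible. Hence for every pair $i\neq j$ we need $a_ia_j\equiv 2$ or $3\pmod 4$.

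Now split by parity. If two of the $a_i$ are even, their product is divisible by $4$, a contradiction. So at most one $a_i$ is even, and at least three are odd. Reducing those odd ones modulo $4$ into the classes $1,3$, pigeonhole gives two of them in the same class. Their product is then $\equiv1\pmod4$, again a contradiction. In fact this shows that even three integers with the $D(p)$-property cannot exist, so no four (indeed no three) matrices have the Jordan $D(N)$-property.

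The nonexistence of an upper-triangular Jordan $D(N)$-quadruple follows immediately, since such a quadruple would in particular have the Jordan $D(N)$-property. The only point needing care is that the argument must not use distinctness or nonzeroness of the $a_i$; the mod $4$ reasoning above does not rely on either. This is the main, though mild, obstacle.
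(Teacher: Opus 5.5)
Your argument is correct and is essentially the paper's proof: the mod $4$ argument from Brown applied to the diagonal entries, together with the observation that it uses neither distinctness nor nonzeroness. One caveat is that your aside that even three integers with the $D(p)$-property cannot exist is false and does not follow from your argument. With three integers you only get two odd ones, and these can lie in different classes mod $4$; for example, $\{1,7,14\}$ is a $D(2)$-triple, which gives the Jordan $D(2I)$-triple $\{I,7I,14I\}$ in $UT_2(\Z)$. The aside is not needed for the statement, so you should simply drop it.
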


\begin{proof} {\color{black}
Assume first that $p\equiv2\pmod4$ and that the four matrices in \eqref{quadruple} have the Jordan $D(N)$-property. We use the modulo $4$ argument
from the proof of Theorem~1 in \cite{bro}. We include it here to note
that the argument remains valid when repetitions and zero entries are
allowed.

If two of the integers $a_1,\ldots,a_4$ were even, say $a_i$ and
$a_j$, then
\[
a_i a_j+p\equiv2\pmod4,
\]
which is impossible for a square. Hence at most one of the $a_i$ is
even, so at least three of them are odd.

For any two odd entries $a_i,a_j$, the condition
$$a_i a_j+p=\square$$
implies
$$a_i a_j\equiv3\pmod4.$$
Thus any two of the odd entries must have different residues modulo
$4$. This is impossible for three odd integers, since there are only
two odd residue classes modulo $4$.

The case $q\equiv2\pmod4$ is analogous.}
\end{proof}

\subsection{Case II} $p,q$ are odd, $p\equiv q\pmod4$ and $r\equiv1\pmod2$.

\begin{prop}\label{prop:4k3-4n3-odd}
Let
$$N=\begin{pmatrix}4k+3&2m+1\\0&4n+3\end{pmatrix}\in UT_2(\Z).$$
Then there are no four matrices in $UT_2(\mathbb Z)$ having the Jordan $D(N)$-property.
Consequently, there is no upper-triangular Jordan $D(N)$-quadruple.
\end{prop}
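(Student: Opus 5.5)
The plan is to combine the three ingredients listed at the start of this section: scalar $D(p)$/$D(q)$ conditions, integrality of the Jordan product, and the square condition. The guiding observation is that whenever the trace $x+y$ of a square root is even, the upper-right entry of \eqref{aiaj} must be even. If $x+y\neq 0$ this holds because that entry is divisible by $x+y$. If $x+y=0$ the entry is $0$, which is also even.

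\textbf{Step 1 (diagonal parity).} Write $A_i\circ A_j+N=X_{ij}^2$ with diagonal entries $x_{ij}^2=a_ia_j+p$ and $y_{ij}^2=d_id_j+q$. Since $p\equiv3\pmod4$, we get $a_ia_j\equiv1$ or $2\pmod 4$. Hence two even $a$'s are impossible, so at most one $a_i$ is even. Moreover, $x_{ij}$ is even exactly when $a_i,a_j$ are both odd, and $x_{ij}$ is odd when one of them is even. The same holds for the $d_i$ and $y_{ij}$, since $q\equiv3\pmod4$.

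\textbf{Step 2 (integrality).} Let $t_k=\tr A_k$. The entry $e_{ij}=\tfrac12(b_it_j+b_jt_i)+r$ must be an integer, so $b_it_j+b_jt_i$ is even. When $x_{ij}+y_{ij}$ is even, $e_{ij}$ must be even. Since $r$ is odd, this means $\tfrac12(b_it_j+b_jt_i)$ is odd.

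\textbf{Step 3 (case split on where the even diagonal entries sit).}
\begin{enumerate}
\item[(a)] All $a_i,d_i$ are odd, or $a_1,d_1$ are both even and all other diagonal entries are odd. Then every $t_k$ is even, so $c_k=t_k/2\in\Z$. In every pair, $x_{ij}$ and $y_{ij}$ have the same parity, so $x_{ij}+y_{ij}$ is even. Hence $b_ic_j+b_jc_i\equiv1\pmod2$ for all six pairs, contradicting Lemma~\ref{lem:tech2}.
\item[(b)] Exactly one diagonal entry is even, say $a_1$. Then $t_1$ is odd and $t_2,t_3,t_4$ are even. Integrality for the pairs $(1,j)$ gives $b_jt_1\equiv 0\pmod 2$, so $b_2,b_3,b_4$ are even. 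The pair $(2,3)$ has $x,y$ both even. But $e_{23}=b_2c_3+b_3c_2+r$ is odd, which is a contradiction.
\item[(c)] $a_1$ is even and $d_2$ is even, with all other diagonal entries odd. Then $t_1,t_2$ are odd and $t_3,t_4$ are even. Integrality for $(1,3)$ and $(1,4)$ forces $b_3,b_4$ even. Pair $(3,4)$ has all diagonal entries odd, so $x_{34},y_{34}$ are even. But $e_{34}\equiv r$ is odd, which is a contradiction.
\end{enumerate}
The remaining configurations are symmetric to these.

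The main point to get right is the bookkeeping in Step 3. One must make sure each configuration yields either six pairs suitable for Lemma~\ref{lem:tech2}, or enough forced-even $b_j$'s to produce a single bad pair. One must also check that the degenerate case $x+y=0$ really forces an even entry and not merely divisibility. Since the argument never uses distinctness or nonzeroness, it shows that no four matrices have the Jordan $D(N)$-property, as claimed.
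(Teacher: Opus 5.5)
Your proof is correct and follows the paper's argument essentially step for step. It uses the same mod-$4$ parity analysis of the diagonal entries, the same integrality argument forcing the relevant $b_j$ to be even, and the same appeal to Lemma~\ref{lem:tech2}. The only difference is cosmetic: you merge the paper's Case (a) with its subcase (d1), since in both all traces are even and every square root has even trace.
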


\begin{proof}
Assume that the four matrices in \eqref{quadruple} have the Jordan $D(N)$-property. By the scalar $D(n)$-conditions, for every $i\ne j$, the diagonal entries of
$A_i\circ A_j+N$ satisfy
$$a_i a_j+(4k+3)=\square,\qquad d_i d_j+(4n+3)=\square.$$
Hence, among  $a_1,a_2,a_3,a_4$ there is at most one even element,
and the same holds for  $d_1,d_2,d_3,d_4$. Therefore the following parity patterns are possible:
\begin{itemize}
    \item[(a)] all $a_i$ and $d_i$ are odd,
\item[(b)] exactly one $a_i$ is even, all $d_i$ are odd,
\item[(c)] exactly one $d_i$ is even, all $a_i$ are odd,
\item[(d)] exactly one $a_i$ is even and exactly one $d_i$ is even.
\end{itemize}

\textbf{Case (a):} Since  $a_i+d_i$ is even, $c_i\in\Z$ for $i=1,2,3,4$. 
Therefore, the upper-right entry of $A_i\circ A_j+N$ is an integer: 
$$ [A_i\circ A_j+N]_{12}=b_ic_j+b_jc_i+2m+1,$$
for all $1\le i<j\le 4$. Since the diagonal entries of each $A_i\circ A_j+N$ are even, the square condition implies that its upper-right entry is even. Hence
$$ b_ic_j+b_jc_i\equiv 1\pmod 2, ~ 1\le i<j\le 4,$$
which is not possible by Lemma \ref{lem:tech2}.

\textbf{Case (b):} Assume without loss of generality that $a_1$ is even. Then 
$$a_1+d_1\equiv1\pmod 2,~ a_i+d_i\equiv 0\pmod 2, ~  c_i\in\Z, ~i=2,3,4.  $$
By the integrality condition applied to $A_1\circ A_j$, $j=2,3,4$, we get
$$ b_j(a_1+d_1)+b_1(a_j+d_j)\equiv 0\pmod2.$$
Therefore $b_j$ is even for $j=2,3,4$. Hence
$$[A_i\circ A_j+N]_{12}=b_i c_j+b_jc_i+2m+1\equiv 1\pmod2, ~ 2\le i<j\le 4 . $$
On the other hand, for $i,j\in\{2,3,4\}$, all $a_i,d_i$ are odd. Thus, diagonal entries of a square root of $A_i\circ A_j+N$ are even. Hence  the square condition gives
$$[A_i\circ A_j+N]_{12}\equiv 0\pmod2, ~ 2\le i<j\le 4 , $$
a contradiction.

\textbf{Case (c):} Analogous to Case (b).

\textbf{Case (d):} There are two subcases: if even diagonal elements occur in the same matrix or if they occur in different matrices. 

\textbf{(d1)} If the even entries occur in the same matrix, say $a_1$ and $d_1$, then $a_i+d_i$ is even for all $i$, so the integrality condition is satisfied. Note that diagonal entries of each matrix $A_i\circ A_j+N$ are of the same parity. Indeed, 
$$ [A_1\circ A_j+N]_{11}=a_1a_j+4k+3\equiv1\pmod2, \quad j=2,3,4,$$ 
$$  [A_1\circ A_j+N]_{22}=d_1d_j+4n+3\equiv1\pmod2,\quad j=2,3,4,$$
and 
$$ [A_i\circ A_j+N]_{kk}\equiv0\pmod2,\quad k=1,2,\quad 2\le i<j\le4. $$
So, the trace of a square root of each $A_i\circ A_j+N$ is divisible by 2. Hence the square condition imposes that 
 $$ b_ic_j+b_jc_i+1\equiv 0\pmod 2, \quad 1\le i<j\le 4,$$
which is not possible by Lemma \ref{lem:tech2}.

\textbf{(d2)} If the even entries occur in different matrices, say $a_1$ and $d_2$ are even,
then for $j=3,4$, the integrality condition applied to $A_1\circ A_j$ and $A_2\circ A_j$ forces $b_3$ and $b_4$ to be even. Indeed, since $a_1$ and $d_1$ are of opposite parities and $c_3,c_4\in\Z$ because $a_3, d_3, a_4, d_4$ are odd, we have
$$ \frac{a_1+d_1}{2}b_j+c_jb_1\in\Z~\implies~ b_j\equiv0\pmod2,~ j=3,4. $$
Hence 
$$[A_3\circ A_4+N]_{12}=b_3c_4+b_4c_3+2m+1\equiv1\pmod2.$$ 
But $a_3,a_4,d_3,d_4$ are all odd, i.e. diagonal entries of $A_3\circ A_4+N$ are even,  so the square condition forces $[A_3\circ A_4+N]_{12}$ to be even, a contradiction.
\end{proof}

The previous proposition shows that Jordan $D(N)$-quadruples do not exist for one of the congruence classes arising in Case II ($p\equiv q \equiv 3 \pmod4$). We now show that the complementary congruence class ($p\equiv q \equiv 1 \pmod4$) behaves differently by constructing explicit quadruples.

\begin{prop}\label{prop:quadruple-1m1}
If 
$$N=\begin{pmatrix}1&m\\0&1\end{pmatrix}, ~ m\in\Z,$$
then the matrices
$$A_1=\begin{pmatrix}1&-24m\\0&8\end{pmatrix},
A_2=\begin{pmatrix}3&12m\\0&120\end{pmatrix},A_3=\begin{pmatrix}8&-2m\\0&3\end{pmatrix},
A_4=\begin{pmatrix}120&6m\\0&1\end{pmatrix}$$
form an upper-triangular Jordan $D(N)$-quadruple consisting of nonsingular matrices. 
\end{prop}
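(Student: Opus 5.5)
The plan is a direct verification that reduces to the scalar case for the diagonals and a divisibility check for the upper-right entries. By \eqref{aiaj} and \eqref{kvadrat}, $A_i\circ A_j+N$ is a square in $UT_2(\Z)$ exactly when three things hold. The diagonal entries $a_ia_j+1$ and $d_id_j+1$ must be perfect squares, say $r_{ij}^2$ and $t_{ij}^2$. The upper-right entry $b_ic_j+b_jc_i+m$ must be an integer. Finally, for some choice of signs of $r_{ij},t_{ij}$, this entry must be divisible by $r_{ij}+t_{ij}$ (or vanish if that trace is $0$). The square root is then $\begin{pmatrix}r_{ij}&z_{ij}\\0&t_{ij}\end{pmatrix}$, where $z_{ij}$ is the quotient.

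\textbf{Diagonal conditions.} The upper-left entries are $(1,3,8,120)$ and the lower-right entries are $(8,120,3,1)$. Both are orderings of the classical Diophantine quadruple of Fermat, $\{1,3,8,120\}$, so all diagonal conditions hold automatically. The resulting values $(r_{ij},t_{ij})$, up to sign, are
$(2,31),(3,5),(11,3),(5,19),(19,11),(31,2)$ for the pairs $12,13,14,23,24,34$.

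\textbf{Upper-right entries.} Next I compute $c_i=\tfrac12\tr A_i$, which gives $9/2,\,123/2,\,11/2,\,121/2$. Since every $b_i$ is an even multiple of $m$, each $b_ic_j+b_jc_i$ is an integer multiple of $m$. The upper-right entries come out as $-1421m,\,-140m,\,-1424m,\,-56m,\,1096m,\,-87m$. The task is then to pick signs making the trace a divisor of the integer coefficient:
\begin{itemize}
\item pair $12$: $-2+31=29$ divides $1421=29\cdot 49$;
\item pair $13$: $-3+5=2$ divides $140$;
\item pair $14$: $11-3=8$ divides $1424=8\cdot178$;
\item pair $23$: $-5+19=14$ divides $56$;
\item pair $24$: $19-11=8$ divides $1096=8\cdot137$;
\item pair $34$: $31-2=29$ divides $87$.
\end{itemize}
Each $z_{ij}$ is therefore an integer multiple of $m$. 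This step needs care because the choice of signs genuinely matters: $33\nmid 1421$ and $14\nmid1424$. The substance of the argument is that the quadruple was engineered, via the ratios $-24:12:-2:6$ of the $b_i$, so that a suitable sign choice exists for all six pairs simultaneously. It is also consistent with Lemma~\ref{lem:scale-m}: the $m=1$ case scales linearly in $m$. This means I only really need to check $m=1$ and then invoke that lemma (for $m\neq0$), treating $m=0$ separately, where every entry is $0$.

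\textbf{Remaining conditions.} The determinants are $8,360,24,120$, all nonzero, so the matrices are nonsingular and in particular nonzero. They are pairwise distinct because their diagonals differ. The main obstacle is purely the arithmetic bookkeeping of the six upper-right entries and the sign choices; no structural difficulty remains.
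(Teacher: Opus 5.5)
Your proposal is correct and follows essentially the same route as the paper: you compute the six matrices $A_i\circ A_j+N$ directly, with diagonals coming from Fermat's quadruple and upper-right entries $-1421m,-140m,-1424m,-56m,1096m,-87m$ divisible by a suitably signed trace, and your signed traces and quotients reproduce the explicit square roots the paper writes down. You also explicitly check nonsingularity (determinants $8,360,24,120$) and distinctness, which the paper leaves implicit.
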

\begin{rem}
    Note that the top and bottom diagonal entries are the two permutations $(1,3,8,120)$ and $(8,120,3,1)$ of a scalar $D(1)$-quadruple, namely Fermat's quadruple $\{1,3,8,120\}$.
\end{rem}

\begin{proof}
A routine computation yields
$$A_1\circ A_2+N=\begin{pmatrix}4&-1421m\\0&961\end{pmatrix}=
\begin{pmatrix}2&49m\\0&-31\end{pmatrix}^2,$$
$$A_1\circ A_3+N=\begin{pmatrix}9&-140m\\0&25\end{pmatrix}=
\begin{pmatrix}-3&-70m\\0&5\end{pmatrix}^2,$$
$$A_1\circ A_4+N=\begin{pmatrix}121&-1424m\\0&9\end{pmatrix}=
\begin{pmatrix}11&-178m\\0&-3\end{pmatrix}^2,$$
$$A_2\circ A_3+N=\begin{pmatrix}25&-56m\\0&361\end{pmatrix}=
\begin{pmatrix}-5&-4m\\0&19\end{pmatrix}^2,$$
$$A_2\circ A_4+N=\begin{pmatrix}361&1096m\\0&121\end{pmatrix}=
\begin{pmatrix}19&137m\\0&-11\end{pmatrix}^2,$$
$$A_3\circ A_4+N=\begin{pmatrix}961&-87m\\0&4\end{pmatrix}=
\begin{pmatrix}-31&3m\\0&2\end{pmatrix}^2.$$
\end{proof}

\begin{prop}\label{prop:quintuple}
For $N=\begin{pmatrix}1&1\\0&1\end{pmatrix},$ 
the five matrices
\[
A=\begin{pmatrix}0&0\\0&1\end{pmatrix},\ \
B=\begin{pmatrix}0&0\\0&3\end{pmatrix},\ \
C=\begin{pmatrix}0&2\\0&0\end{pmatrix},\ \
D=\begin{pmatrix}1&2\\0&0\end{pmatrix},\ \
E=\begin{pmatrix}3&2\\0&0\end{pmatrix}
\]
form a Jordan $D(N)$-quintuple. Moreover, this quintuple cannot be extended to a Jordan $D(N)$-sixtuple by any upper-triangular integer matrix.
\end{prop}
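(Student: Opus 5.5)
The plan has two parts: first verify the quintuple property directly, then show non-extendability by a parity obstruction coming from the pairs involving $C$ together with the scalar $D(1)$-conditions on the diagonal.

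\emph{Verification of the quintuple.} The five matrices are clearly distinct and nonzero. For each of the ten pairs I will compute $A_i\circ A_j+N$ using formula \eqref{aiaj} with $c_k=\frac12\tr A_k$ from \eqref{ci}, and exhibit an upper-triangular square root using \eqref{kvadrat}. For example:
\begin{itemize}
\item $A\circ B+N=\begin{pmatrix}1&1\\0&4\end{pmatrix}=\begin{pmatrix}1&\frac13\\0&2\end{pmatrix}^2$ fails to have an integral root, but the root $\begin{pmatrix}-1&1\\0&2\end{pmatrix}$ works because its trace is $1$.
\item $C\circ D+N=\begin{pmatrix}1&2\\0&1\end{pmatrix}=\begin{pmatrix}1&1\\0&1\end{pmatrix}^2$.
\end{itemize}
In every case the diagonal is a pair of squares, since $\{0,1,3\}$ has the scalar $D(1)$-property and the zeros are placed suitably. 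One then chooses signs of the diagonal square roots so that their sum divides the upper-right entry. This step is routine.

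\emph{Setting up the extension.} Suppose $X=\begin{pmatrix}x&y\\0&z\end{pmatrix}\in UT_2(\Z)$ has the Jordan $D(N)$-property with all five matrices. The scalar conditions give:
\begin{itemize}
\item from $D,E$ (top entries $1,3$): $x+1=\square$ and $3x+1=\square$;
\item from $A,B$ (bottom entries $1,3$): $z+1=\square$ and $3z+1=\square$.
\end{itemize}
The pairs with $C$, $A$, $B$ have trivial diagonal conditions on the other side, since those entries are $0$.

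First I show that $x$ and $z$ are even. If $x$ were odd, then $x+1$ would be an even square, hence $\equiv0\pmod4$, so $x\equiv3\pmod4$. Then $3x+1\equiv2\pmod4$, which is not a square. Hence $x$ is even, and likewise $z$ is even.

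\emph{The contradiction from the pair with $C$.} Since $b_C=2$ and $c_C=0$, formula \eqref{aiaj} gives
$$X\circ C+N=\begin{pmatrix}1&x+z+1\\0&1\end{pmatrix}.$$
A square root in $UT_2(\Z)$ has diagonal entries $\pm1,\pm1$, so its trace is $0$ or $\pm2$. By the square condition (iii), the upper-right entry $x+z+1$ must therefore be either $0$ or even. But $x+z$ is even, so $x+z+1$ is odd, hence both nonzero and not even. This is a contradiction, so no sixth matrix exists, distinct or not.

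The main point to get right is choosing which pair produces the obstruction. The conditions coming from $A$, $B$, $D$, $E$ involve $y$ and leave infinitely many candidate diagonals (for instance $x,z\in\{0,8,120,\dots\}$ from extending $\{1,3\}$). The key observation is that the pair with $C$ kills $y$ entirely and links $x$ and $z$ through the parity of their traces. I would also double-check that the $D(1)$ parity lemma above applies with $x=0$ allowed, which it does.
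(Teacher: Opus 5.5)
Your proposal is correct and matches the paper's proof: the scalar conditions from $D,E$ (resp.\ $A,B$) force the diagonal entries $x,z$ of a sixth matrix to be even, and then $X\circ C+N=\begin{pmatrix}1&x+z+1\\0&1\end{pmatrix}$ has an odd upper-right entry, which is impossible for a square whose diagonal entries are $1,1$. The only blemish is expository: in your $A\circ B$ bullet you mean that the particular root $\begin{pmatrix}1&\frac13\\0&2\end{pmatrix}$ is non-integral, not that the matrix has no integral root. The ten pair checks you call routine do all go through, with upper-right entries $1,2,2,2,4,4,4,2,4,5$.
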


\begin{proof}
Directly,
$$A\circ B+N=\begin{pmatrix}1&1\\0&4\end{pmatrix}=
\begin{pmatrix}-1&1\\0&2\end{pmatrix}^2,$$
$$A\circ C+N=A\circ D+N=A\circ E+N=C\circ D+N=\begin{pmatrix}1&2\\0&1\end{pmatrix}=
\begin{pmatrix}1&1\\0&1\end{pmatrix}^2,$$
$$B\circ C+N=B\circ D+N=B\circ E+N=C\circ E+N=\begin{pmatrix}1&4\\0&1\end{pmatrix}=
\begin{pmatrix}1&2\\0&1\end{pmatrix}^2,$$
$$D\circ E+N=\begin{pmatrix}4&5\\0&1\end{pmatrix}=
\begin{pmatrix}2&5\\0&-1\end{pmatrix}^2.$$
So the five matrices form a Jordan $D(N)$-quintuple.

Now suppose
$F=\begin{pmatrix}a&b\\0&d\end{pmatrix}$
extends it to a Jordan $D(N)$-sixtuple. From the pairs $(F,D)$ and $(F,E)$ one gets that $a+1$ and $3a+1$ must both be squares, hence $a$ is even. Similarly, from $(F,A)$ and $(F,B)$ one gets that $d+1$ and $3d+1$ are squares, hence $d$ is even. But then
\[
C\circ F+N=\begin{pmatrix}1&a+d+1\\0&1\end{pmatrix}
\]
has odd upper-right entry, while by  the form  \eqref{kvadrat},  a square with diagonal entries $1,1$ must have even upper-right entry. Contradiction.
\end{proof}

\begin{rem} 
Note that, by \cite[Theorem~11]{diz2}, $N$ from Proposition \ref{prop:quadruple-1m1} can be represented as a difference of two squares in $UT_2(\Z)$ if and only if $m$ is even. Thus, for odd $m$, Proposition \ref{prop:quadruple-1m1} provides Jordan $D(N)$-quadruples for matrices that are not representable as differences of two squares. This indicates that the case $p\equiv q\equiv1\pmod4$ will be quite different.
\end{rem}

\begin{prop}\label{prop:8k1-8n5-odd}
Let $$N=\begin{pmatrix}8k+1&2m+1\\0&8n+5\end{pmatrix}~ \text{ or }~ N=\begin{pmatrix}8k+5&2m+1\\0&8n+1\end{pmatrix}\in UT_2(\Z).$$ 
Then there are no four matrices in $UT_2(\mathbb Z)$ having the Jordan $D(N)$-property.
Consequently, there is no upper-triangular Jordan $D(N)$-quadruple.
\end{prop}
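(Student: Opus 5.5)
We would follow the template of Proposition~\ref{prop:4k3-4n3-odd}. First, the two forms of $N$ are interchanged by swapping the roles of the diagonal entries. Indeed, \eqref{aiaj} and \eqref{kvadrat} are symmetric in the $(1,1)$ and $(2,2)$ positions: the upper-right entry of a square is $z(x+y)$, and this is symmetric in $x,y$. So it suffices to treat $p=8k+1$, $q=8n+5$, $r=2m+1$. Assume the four matrices \eqref{quadruple} have the Jordan $D(N)$-property. The goal is to find pairs $i<j$ such that three things hold: the two diagonal entries $a_ia_j+p$ and $d_id_j+q$ have the same parity, $c_i,c_j\in\Z$, and $b_i,b_j$ are controlled. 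For such a pair the square condition forces $b_ic_j+b_jc_i+r$ to be even. We then reach a contradiction either through a single pair with $b_i,b_j$ even, or through Lemma~\ref{lem:tech2} when all six pairs qualify.

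\textbf{Step 1 (scalar parity patterns).} Work modulo $8$. Odd squares are $\equiv1\pmod 8$, and even squares are $\equiv0\pmod4$.

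For the $d_i$ (with $q\equiv5\pmod8$): if $d_id_j+q$ is odd it must be $\equiv1\pmod8$, so $d_id_j\equiv4\pmod8$. If it is even, then $d_id_j\equiv3\pmod4$. Hence two odd $d$'s have opposite residues mod $4$, so at most two $d_i$ are odd. An odd $d_i$ paired with an even $d_j$ forces $d_j\equiv4\pmod8$. Two even $d$'s must both be $\equiv2\pmod4$. These constraints are incompatible unless every $d_i$ is even with $d_i\equiv2\pmod4$, or exactly one $d_i$ is odd and the other three are $\equiv4\pmod8$; in the latter case any two of those three give $d_id_j\equiv0\pmod{16}$, which is not $\equiv4\pmod 8$, a contradiction. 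So all $d_i\equiv2\pmod4$, and every $d_id_j+q$ is odd.

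For the $a_i$ (with $p\equiv1\pmod8$): $a_ia_j+p$ odd requires $a_ia_j\equiv0\pmod8$, while $a_ia_j+p$ even requires $a_ia_j\equiv3\pmod4$. I expect this to yield the patterns: all $a_i$ even with pairwise products divisible by $8$, or at most two odd $a_i$ (with opposite residues mod $4$) and the remaining $a_i$ divisible by $8$.

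\textbf{Step 2 (integrality).} Since $d_i$ is even, $c_i=(a_i+d_i)/2$ is an integer exactly when $a_i$ is even. When $a_i$ is odd and $a_j$ is even, integrality of $A_i\circ A_j$ forces $b_j$ to be even, as in case (b) of Proposition~\ref{prop:4k3-4n3-odd}.

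\textbf{Step 3 (square condition).} Pairs with $a_ia_j+p$ odd (that is, both $a_i,a_j$ even) have both diagonal entries odd, so their upper-right entry must be even.

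If all four $a_i$ are even, every pair qualifies, every $c_i\in\Z$, and we need $b_ic_j+b_jc_i\equiv1\pmod2$ for all $i<j$. This is impossible by Lemma~\ref{lem:tech2}.

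If some $a_i$ is odd, then at least two $a_j$ are even, and Step 2 makes their $b_j$ even. For two such indices $j,l$ the upper-right entry is $b_jc_l+b_lc_j+r\equiv1\pmod2$, while it must be even, a contradiction.

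The main obstacle is Step 1: making the mod-$8$ case analysis on the $a_i$ airtight, including the degenerate case where some $a_i=0$ (allowed here, since repetitions and zeros are permitted). If the patterns turn out messier than expected, the fallback is to check that every surviving pattern still contains either two indices with even $a$ or four indices with even $a$. In both situations Step 3 already produces the contradiction.
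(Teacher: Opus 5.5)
Your proposal is correct and follows essentially the paper's route. You reduce by the diagonal symmetry, show all $d_i\equiv 2\pmod 4$ so that every $d_id_j+q$ is odd, and then split into the case where all $a_i$ are even (handled by Lemma~\ref{lem:tech2}) and the case with one or two odd $a_i$ (where integrality makes $b_j$ even for the even-$a$ indices and a single such pair gives the parity contradiction); this is exactly the paper's Cases (a)--(c). Two harmless imprecisions: $a_ia_j+p$ is odd whenever at least one of $a_i,a_j$ is even, not only when both are, but you only use the implication from ``both even''; and the hedged pattern claim for the $a_i$ is unnecessary, since all you need is that at most two $a_i$ are odd, which follows at once from your observation that two odd $a$'s must have opposite residues modulo $4$.
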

\begin{proof}
It suffices to consider
$$N=\begin{pmatrix}8k+1&2m+1\\0&8n+5\end{pmatrix},$$
since the other case is symmetric. Assume that the four matrices in \eqref{quadruple} have the Jordan $D(N)$-property. By the scalar $D(n)$-conditions, the top diagonal entries satisfy
 $$a_i a_j+1\equiv \square \pmod 8,\qquad i\ne j.$$  
 Thus, up to permutation, the possible residue multisets $\{a_1,a_2,a_3,a_4\}$ modulo 8 are:
$$(0,0,0,r),r\in\Z_8,$$$$
(0,0,1,3), (0,0,1,7), (0,0,2,4), (0,0,3,5),
(0,0,4,4), (0,0,4,6),   (0,0,5,7),$$$$
(0,2,4,4), (0,4,4,4), (0,4,4,6),
(2,4,4,4), (4,4,4,4), (4,4,4,6).$$
Similarly, the bottom diagonal entries satisfy
$$d_i d_j+5\equiv \square \pmod 8,\qquad i\ne j,$$  
and hence, up to permutation,
the possible residue multisets $\{d_1,d_2,d_3,d_4\}$ modulo 8 are more restrictive:
$$(2,2,2,2), (2,2,2,6), (2,2,6,6), (2,6,6,6), (6,6,6,6).$$
In particular, all $d_i\equiv2\pmod4$.

We consider, up to permutation, the following cases:
\begin{enumerate}
\item[(a)] all $a_i$ are even,
\item[(b)] $(a_1,a_2,a_3,a_4)\equiv(0,0,0,\rho)\pmod8$, where $\rho$ is odd,
\item[(c)] $(a_1,a_2,a_3,a_4)\equiv(0,0,1,3),(0,0,1,7),(0,0,3,5),(0,0,5,7)\pmod8$. 
\end{enumerate}

\textbf{Case (a):} Since all $a_i$ and $d_i$
are even, all $a_i+d_i$ are even and $c_i\in\Z$, $i=1,2,3,4$. Moreover, for all $i\neq j$ both diagonal entries of $A_i\circ A_j+N$ are odd, so the square condition implies that the upper-right entry of $A_i\circ A_j+N$ is even. Since $2m+1$ is odd, \eqref{aiaj} gives
$$b_i c_j+b_j c_i\equiv1\pmod2,\qquad 1\le i<j\le4.$$
This is impossible by Lemma \ref{lem:tech2}.

\textbf{Case (b):} Assume
$$a_1\equiv a_2\equiv a_3\equiv0\pmod8, \qquad a_4\equiv1\pmod2.$$
Since $d_i\equiv2\pmod4$, $i=1,2,3,4$, we have
$$a_i+d_i\equiv 2\pmod4,~ i=1,2,3,\qquad a_4+d_4\equiv 1\pmod2.$$
By the integrality condition applied to $A_i\circ A_4$, $i=1,2,3$, it follows that $b_1,b_2,b_3$ are even. Now consider $A_1\circ A_2+N$. Its diagonal entries are odd, so the square condition forces its upper-right entry to be even. On the other hand, using \eqref{aiaj} and the fact that $b_1,b_2$ are even, we obtain
$$[A_1\circ A_2+N]_{12}=b_1c_2+b_2c_1+2m+1\equiv1\pmod2,$$
a contradiction.

\textbf{Case (c):} We may assume
$$a_1\equiv a_2\equiv0\pmod8,\qquad a_3\equiv a_4\equiv 1\pmod 2.$$
Again,
$$
a_1+d_1\equiv a_2+d_2\equiv2\pmod4,$$
while
$$a_3+d_3\equiv a_4+d_4\equiv1\pmod2.$$
By the integrality condition applied to $A_1\circ A_3$ and $A_1\circ A_4$, we get
$b_1\equiv 0\pmod2.$
Similarly, integrality of $A_2\circ A_3$ and $A_2\circ A_4$ forces $b_2\equiv 0\pmod2.$
Hence, 
$$[A_1\circ A_2+N]_{12}= b_1c_2+b_2c_1+2m+1\equiv1\pmod2.$$
But the diagonal entries of $A_1\circ A_2+N$ are odd, so the square condition requires its upper-right entry to be even, a contradiction.

Thus all possible residue patterns lead to contradictions, and hence no Jordan $D(N)$-quadruple exists.
\end{proof}
 
\begin{prop}\label{prop:8k5-8n5-odd}
Let $$N=\begin{pmatrix}8k+5&2m+1\\0&8n+5\end{pmatrix}\in UT_2(\Z).$$ 
Then there are no four matrices in $UT_2(\mathbb Z)$ having the Jordan $D(N)$-property.
Consequently, there is no upper-triangular Jordan $D(N)$-quadruple.
\end{prop}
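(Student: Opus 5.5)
The plan is to reuse the scalar residue analysis from the proof of Proposition~\ref{prop:8k1-8n5-odd}, now on both diagonals. Since $p\equiv q\equiv 5\pmod 8$, the top and the bottom diagonal entries both satisfy the congruence condition that was used there for the bottom entries. This should force all $c_i$ to be even. After that, a parity check on the upper-right entry of a single product $A_i\circ A_j+N$ gives a contradiction.

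First, assume that the four matrices in \eqref{quadruple} have the Jordan $D(N)$-property. The scalar $D(n)$-conditions give
$$a_ia_j+(8k+5)=\square,\qquad d_id_j+(8n+5)=\square,\qquad i\neq j .$$
Reducing modulo $8$, where the squares are $0,1,4$, we need $a_ia_j\equiv 3,4,7\pmod 8$ for all $i\ne j$, and likewise for the $d_i$. This is exactly the computation behind the list of admissible multisets $\{d_1,\dots,d_4\}$ in the proof of Proposition~\ref{prop:8k1-8n5-odd}. I would quote it, or quickly re-verify it by checking residues, to conclude that
$$a_i\equiv d_i\equiv 2\pmod 4,\qquad i=1,2,3,4 .$$
The re-verification goes as follows. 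Two even entries must have product $\equiv 4\pmod 8$, so both are $\equiv 2\pmod 4$. Also, two odd entries and an even entry cannot coexist with a fourth entry. Checking this exclusion of odd entries carefully, including the possibility of repeated values, is the only real work, and it is where I expect the main (mild) obstacle to be.

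Second, it follows that $a_i+d_i\equiv 0\pmod 4$, so each $c_i=\frac12(a_i+d_i)$ is an even integer. In particular the integrality condition holds automatically, and
$$[A_i\circ A_j+N]_{12}=b_ic_j+b_jc_i+2m+1\equiv 1\pmod 2 .$$

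Third, both diagonal entries of $A_i\circ A_j+N$ are odd, since $a_ia_j+8k+5\equiv 4+5\equiv 1\pmod 8$, and similarly for the bottom entry. Hence any upper-triangular square root has odd diagonal entries and therefore even trace. By \eqref{kvadrat} (the square condition), the upper-right entry must then be even. This contradicts the previous step already for the single pair $(A_1,A_2)$, so Lemma~\ref{lem:tech2} is not even needed, and no four matrices with the Jordan $D(N)$-property exist.
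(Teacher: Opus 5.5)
Your proof is correct. It differs from the paper's only in the final step, but the difference is worth noting.

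The paper also reduces modulo $8$ and obtains $a_i,d_i\equiv 2$ or $6\pmod 8$. It then uses only the fact that these entries are even, and falls back on Case~(a) of Proposition~\ref{prop:8k1-8n5-odd}:
\begin{itemize}
\item $c_i\in\Z$;
\item the odd diagonal entries force an even upper-right entry;
\item hence $b_ic_j+b_jc_i\equiv 1\pmod 2$ for all six pairs, and Lemma~\ref{lem:tech2} gives the contradiction.
\end{itemize}

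You keep the sharper information $a_i\equiv d_i\equiv 2\pmod 4$, which is exactly what ``$2$ or $6\pmod 8$'' says. So every $c_i$ is even, and the single pair $(A_1,A_2)$ is already contradictory. The $\Z_2^2$-lemma becomes unnecessary.

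Your route is shorter and more elementary for this particular class. The paper's route reuses an argument that needs only all diagonal entries even. That weaker hypothesis is what Case~(a) of Proposition~\ref{prop:8k1-8n5-odd} requires, because there the $a_i$ may be $\equiv 0\pmod 4$ while the $d_i\equiv 2\pmod 4$. Then the $c_i$ can be odd, and a single pair yields nothing.

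The residue step you flagged as the main obstacle can be stated cleanly:
\begin{itemize}
\item Two odd entries (including a repeated one) must lie in different classes mod $4$, so at most two entries are odd.
\item Hence at least two entries are even. Any two even entries have product $\equiv 4\pmod 8$, so both are $\equiv 2\pmod 4$.
\item An even entry paired with an odd one would have to be $\equiv 4\pmod 8$, which is impossible.
\end{itemize}
So no odd entry occurs, and all four entries are $\equiv 2\pmod 4$.
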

\begin{proof}
Assume that the four matrices in \eqref{quadruple} have the Jordan $D(N)$-property.  By the scalar $D(n)$-conditions,
$$a_ia_j+8k+5=\square,~ d_id_j+8n+5=\square,~ 1\le i<j\le 4.$$
Reducing modulo 8, this implies
$$a_i,d_i\equiv2 \text{ or }6\pmod8,~ i=1,2,3,4.$$
Hence all $a_i$ and $d_i$ are even. Since $[N]_{12}=2m+1$ is odd, we are exactly in Case~(a) of the proof of Proposition~\ref{prop:8k1-8n5-odd}, which gives a contradiction.
\end{proof}

{\color{black}
\begin{prop}\label{prop:s2-m-t2}
Let  $s, t \ge 0$, $r \in \Z$, and $$N_r=\begin{pmatrix}
   s^2& r\\ 0& t^2
\end{pmatrix}.$$
Then $N_r$ admits infinitely many Jordan $D(N_r)$-quadruples in $UT_2(\Z)$. 
\end{prop}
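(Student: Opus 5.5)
The plan is to reduce the problem first to $r=0$ and $r=1$, and then to build an explicit family in which the diagonal conditions hold automatically, so that only congruences on the upper-right entries remain.

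\textbf{Reduction.} If $r=0$, then $N_0=\mathrm{diag}(s,t)^2$ is itself a square in $UT_2(\Z)$. For the nilpotent matrices $U_k=\begin{pmatrix}0&k\\0&0\end{pmatrix}$ we have $U_k\circ U_l=0$, so any four distinct nonzero $U_k$ give a (degenerate) Jordan $D(N_0)$-quadruple, and there are infinitely many such choices. If $r\neq0$, Lemma~\ref{lem:scale-m} with $m=r$ turns any Jordan $D(N_1)$-quadruple into a Jordan $D(N_r)$-quadruple. This scaling keeps the matrices distinct and nonzero, because two matrices with equal diagonals differ only in $b_i$, and $m\neq0$. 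So it suffices to produce infinitely many Jordan $D(N_1)$-quadruples. Alternatively, I would keep $r$ as a parameter throughout and set $b_i=r\beta_i$, which amounts to the same thing.

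\textbf{Ansatz for the diagonals.} I want the diagonal conditions to hold identically. I take
$$A_i=\begin{pmatrix}0&b_i\\0&t e_i\end{pmatrix},$$
where $\{e_1,e_2,e_3,e_4\}$ is a scalar $D(1)$-quadruple, for instance Fermat's quadruple $\{1,3,8,120\}$, as in Proposition~\ref{prop:quadruple-1m1}.
\begin{itemize}
\item Top-left entries: $0\cdot0+s^2=s^2$ is a square.
\item Bottom-right entries: $t^2e_ie_j+t^2=(tw_{ij})^2$ with $w_{ij}\in\{2,3,5,11,19,31\}$.
\end{itemize}
The case $t=0$ needs separate treatment. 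There the bottom condition is automatic, and one can choose the bottom diagonal entries freely, or swap the roles of $s$ and $t$. The Jordan product has upper-right entry $\tfrac{t}{2}(b_ie_j+b_je_i)$, so taking all $b_i$ even, or $t$ even, ensures integrality.

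\textbf{Square condition.} A square root of $A_i\circ A_j+N_r$ has diagonal $(\pm s,\pm tw_{ij})$ and trace $T_{ij}=\pm s\pm tw_{ij}$. I choose the signs so that $T_{ij}\neq0$; this is always possible unless $s=t=0$, and that trivial case is covered by the nilpotent construction above. By \eqref{kvadrat}, the square condition then becomes
$$\tfrac{t}{2}(b_ie_j+b_je_i)+r\equiv0\pmod{T_{ij}},\qquad 1\le i<j\le4.$$
I also have the freedom to replace $e_i$ by another $D(1)$-quadruple, for example from the parametric family \eqref{poli.formula} with $\ell$ chosen so that $n=1$, or from the family $\{k-1,k+1,4k,16k^3-4k\}$. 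This changes the moduli $T_{ij}$, and I would use it to make them pairwise coprime, or at least coprime to $t$.

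\textbf{Main obstacle.} The hard step is solving this system of six linear congruences in the four unknowns $b_i$, for arbitrary $s,t$. My first attempt would be to solve it pair by pair using the Chinese remainder theorem, choosing the free $D(1)$-quadruple so that $\gcd(t e_j,T_{ij})=1$. If the moduli cannot be decoupled in general, the fallback is to use two distinct nonzero diagonal patterns on top as well, imitating Proposition~\ref{prop:quadruple-1m1}. That means using a $D(s^2)$-quadruple $s\{1,3,8,120\}$ on top and a permutation $t\{8,120,3,1\}$ on the bottom, and then searching for $b_i$ linear in $r$ as explicit polynomials in $s,t$. The required infinitude would then come from the free parameters: the choice of the quadruple, or adding suitable multiples of $\prod T_{ij}$ to each $b_i$, which preserves all six congruences. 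Finally I would exclude the finitely many parameter values that make two matrices coincide or vanish.
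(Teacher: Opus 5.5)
Your reductions are fine: for $r=0$ four distinct nilpotent $U_k$ do form (degenerate) Jordan $D(N_0)$-quadruples, which the statement allows, and Lemma~\ref{lem:scale-m} correctly reduces $r\neq0$ to $r=1$. The gap is the core step. You never actually solve the six congruences, and with the diagonals you propose they are unsolvable as soon as $\gcd(s,t)>1$.

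In your main ansatz (top entries $0$, bottom entries $te_i$), in its swap, and in your fallback ($s\{1,3,8,120\}$ on top, $t\{8,120,3,1\}$ below), every $a_i$ is a multiple of $s$ and every $d_i$ is a multiple of $t$.
\begin{itemize}
\item If an odd prime $p$ divides $s$ and $t$, then $p$ divides the trace of every upper-triangular square root of $A_i\circ A_j+N_1$, since its diagonal is $\pm sw,\pm tw'$. Also $p$ divides $b_i(a_j+d_j)+b_j(a_i+d_i)=2(b_ic_j+b_jc_i)$. Hence the upper-right entry $b_ic_j+b_jc_i+1$ is $\equiv1\pmod p$, whereas \eqref{kvadrat} forces it to be a multiple of that trace.
\item If $2$ divides $s$ and $t$, all $c_i$ are integers and all traces are even. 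Then $b_ic_j+b_jc_i$ would have to be odd for all six pairs, contradicting Lemma~\ref{lem:tech2}.
\end{itemize}
No choice of $D(1)$-quadruple or CRT bookkeeping helps, since $\gcd(t,\pm s\pm tw_{ij})=\gcd(s,t)$ for every $w_{ij}$. Even for coprime odd $s,t$, Fermat's quadruple fails at the pair $(3,4)$: the entry $t(60b_3+4b_4)+1$ is odd while the trace $\pm s\pm31t$ is even. Finally, $s=t=0$ is not covered by the nilpotent construction when $r\neq0$, because $\begin{pmatrix}0&r\\0&0\end{pmatrix}$ is not a square in $UT_2(\Z)$.

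The missing idea is to use diagonals that are not rescaled $D(1)$-quadruples. The paper puts the regular $D(s^2)$-quadruple $\{0,1,2s+1,4s+4\}$ on top and $\{1,0,4t+4,2t+1\}$ below. Then every $a_i+d_i$ is odd, the $c_i$ are half-integers, and both obstructions disappear. With $A_1=\begin{pmatrix}0&-1\\0&1\end{pmatrix}$ and $A_2=\begin{pmatrix}1&-1\\0&0\end{pmatrix}$ one has $A_1\circ A_2+N_1=\mathrm{diag}(s,t)^2$ exactly. For $k=3,4$, the matrices $A_1\circ A_k+N_1$ and $A_2\circ A_k+N_1$ share their upper-right entry, and their square roots have traces $2+s+t$ and $1+s+t$ (in some order). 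So one free parameter in each $b_k$ handles two conditions at once. The last pair then becomes a single linear congruence in $X,Y$ modulo $3S+1$, with $S=1+s+t$, which is solvable because the relevant gcd divides $2$.
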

\begin{proof}
We first consider the case $r=0$. 
Since $N_0$ is representable as a difference of two squares in $UT_2(\Z)$,
 by Theorem~\ref{thm:quadruple_for_diff_sq}, $N_0$ admits infinitely many Jordan
$D(N_0)$-quadruples.

Now assume that $r\neq0$. It suffices first to consider $r=1$. Indeed, once a Jordan
$D(N_1)$-quadruple is constructed for
$$N_1=\begin{pmatrix}s^2&1\\0&t^2\end{pmatrix},$$
Lemma~\ref{lem:scale-m} yields a Jordan $D(N_r)$-quadruple for every
$r\neq0$ by multiplying all upper-right entries by $r$.
Let
$$A_1=\begin{pmatrix}0&-1\\0&1\end{pmatrix},\qquad
A_2=\begin{pmatrix}1&-1\\0&0\end{pmatrix},$$
$$A_3=\begin{pmatrix}2s+1&b_3\\0&4t+4\end{pmatrix},\qquad
A_4=\begin{pmatrix}4s+4&b_4\\0&2t+1\end{pmatrix},$$
where $b_3,b_4\in\Z$  will be chosen below.
We are looking for $b_3,b_4\in\Z$ such that the four matrices form a Jordan $D(N_1)$-quadruple. A direct computation gives
$$A_1\circ A_2+N_1=
\begin{pmatrix}s&0\\0&t\end{pmatrix}^2,~ A_1\circ A_3+N_1=
\begin{pmatrix}s^2 & \frac{1}{2} (b_3-2 s-4 t-3) \\
 0 & (t+2)^2\end{pmatrix},$$
$$A_1\circ A_4+N_1=
\begin{pmatrix}s^2 & \frac{1}{2} (b_4-4 s-2 t-3) \\
 0 & (t+1)^2\end{pmatrix},$$
 $$ A_2\circ A_3+N_1=
\begin{pmatrix}(s+1)^2 & \frac{1}{2} (b_3-2 s-4 t-3) \\
 0 & t^2\end{pmatrix},$$
$$A_2\circ A_4+N_1=
\begin{pmatrix}(s+2)^2 & \frac{1}{2} (b_4-4 s-2 t-3) \\
 0 & t^2\end{pmatrix},$$
 $$
A_3\circ A_4+N_1=
\begin{pmatrix}(3 s+2)^2 & \frac{1}{2} (2 + b_3 (5 + 4 s + 2 t) + b_4 (5 + 2 s + 4 t)) \\
 0 & (3 t+2)^2\end{pmatrix}.$$
To make $A_1\circ A_3+N_1$ and $A_2\circ A_3+N_1$  squares, we choose $b_3$ so that their common upper-right entry is divisible by both corresponding traces. Set
$$b_3= (2 + s + t) (2 + 2X (1 + s + t))-1 + 2 t,\qquad X\in\Z .  $$
Then
$$[A_1\circ A_3+N_1]_{12}=[A_2\circ A_3+N_1]_{12}=(1 + s + t) (2 + s + t) X .$$
Since the traces of the corresponding diagonal square roots are $2 + s + t$ and
$1 + s + t$, respectively, both matrices are squares in $UT_2(\Z)$.

Similarly, choosing 
$$b_4= (1 + s + t) (4 - 2(2 + s + t) Y)-1 - 2 t,\qquad Y\in\Z  $$
gives 
$$[A_1\circ A_4+N_1]_{12}=[A_2\circ A_4+N_1]_{12}=-  (1 + s + t) (2 + s + t) Y ,$$
so $A_1\circ A_4+N_1$ and $A_2\circ A_4+N_1$ are squares as well.

It remains to consider $A_3\circ A_4+N_1$. Set
$$ S=1+s+t,~ P=2+2s+t,~ Q=2+s+2t.$$ 
After substituting the above expressions for $b_3$ and $b_4$, the upper-right entry becomes
$$ [A_3\circ A_4+N_1]_{12}=S(S + 1)(2P + 1)X - S(S + 1)(2Q + 1)Y+ 4PQ.$$
The trace of the indicated diagonal square root is
$$ (3s + 2) + (3t + 2) = 3S + 1.$$
Thus it suffices to solve
\begin{equation}\label{eq:cong}
  S(S+1)(2P+1)X-S(S+1)(2Q+1)Y\equiv-4PQ\pmod{3S+1}.
\end{equation}

The congruence is solvable if
$$g:=\gcd\bigl(S(S+1)(2P+1),S(S+1)(2Q+1),3S+1\bigr)\mid 4PQ.$$
We show that $g\mid 2$. First note that
$$\gcd(S,3S+1)=1,\qquad \gcd(S+1,3S+1)\mid 2,$$
and
$$(2P+1)+(2Q+1)-2(3S+1)=2.$$
Let $p$ be an odd prime divisor of $g$. Since $p\mid 3S+1$,
the first two relations imply that $p\nmid S(S+1)$. Hence, from
$$p\mid S(S+1)(2P+1),\qquad p\mid S(S+1)(2Q+1),$$
we obtain
$$p\mid 2P+1,\qquad p\mid 2Q+1.$$
The last displayed relation then gives $p\mid2$, a contradiction.
Thus $g$ has no odd prime divisors, and therefore $g$ is a power of $2$.

Since $2P+1$ and $2Q+1$ are odd, it follows that
$$g\mid S(S+1).$$
If $S$ is even, then $3S+1$ is odd, and hence $g=1$.
If $S$ is odd, then $g\mid S+1$. Since also $g\mid3S+1$, we obtain
\[
g\mid 3(S+1)-(3S+1)=2.
\]
Thus, in all cases, $g\mid2$. Consequently, $g\mid4PQ$, and
\eqref{eq:cong} is solvable.

Moreover, once one solution $(X_0, Y_0)$ is found, $(X_0 + (3S + 1)k, Y_0)$ is again
a solution for every $k \in Z$. Since $S(S+1)\neq0$, these choices give infinitely many distinct values of $b_3$, and hence infinitely many distinct Jordan $D(N_1)$-
quadruples. Note that the diagonal pairs of $A_1, A_2, A_3, A_4$ are pairwise distinct for $s, t \ge 0$. Hence all four matrices are distinct
and nonzero for every $s,t,X,Y$.
\end{proof}

\begin{rem}
Proposition \ref{prop:s2-m-t2} shows that the remaining part of Case II contains a large positive subfamily: whenever both diagonal entries are squares, a Jordan $D(N)$-quadruple exists. In particular, this applies to squares congruent to $1$ modulo $8$ with odd upper-right entry.
\end{rem}

}

\subsection{Case III} $(p,q)\equiv(4,4),(12,12)\pmod{16}$, $r\equiv1,2,3\pmod4$.

\begin{prop}\label{prop:16k12-16n12-m}
Let $m\in\Z$, $4\nmid m$ and  $$ N=
\begin{pmatrix}
16k+12 & m\\
0 & 16n+12
\end{pmatrix}\in UT_2(\Z).$$ Then there is no  Jordan $D(N)$-quadruple in $ UT_2(\Z)$.
\end{prop}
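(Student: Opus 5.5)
We will work with the three ingredients listed at the start of this section: the scalar $D(n)$-conditions, the integrality condition and the square condition. We argue modulo $16$ on the diagonal entries, which pins down each diagonal entry of the square roots modulo $4$. We then use the fact that the upper-right entry of $A_i\circ A_j+N$ must be divisible by the trace of a square root. This should give, after halving, a parity condition of exactly the type excluded by Lemma~\ref{lem:tech2}.

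\emph{Step 1 (diagonal entries).} Assume the four matrices in \eqref{quadruple} have the Jordan $D(N)$-property. Squares modulo $16$ lie in $\{0,1,4,9\}$, so from $a_ia_j+16k+12=\square$ we get $a_ia_j\equiv 4$ or $8\pmod{16}$ for all $i\ne j$.

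First, no $a_i$ is odd. If $a_1$ were odd, then $a_2,a_3,a_4$ would all be divisible by $4$, and then $a_2a_3\equiv 0\pmod{16}$, which is impossible.

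So we may write $a_i=2a_i'$. The condition becomes $a_i'a_j'\equiv 1$ or $2\pmod 4$. Hence at most one $a_i'$ is even, and the odd ones pairwise have product $\equiv 1\pmod 4$. The same holds for $d_i=2d_i'$.

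Now let $r_{ij},t_{ij}$ be the diagonal entries of the square root $X_{ij}$. From $(r_{ij}/2)^2=a_i'a_j'+4k+3$ we get:
\begin{itemize}
\item $r_{ij}\equiv 0\pmod 4$ when $a_i',a_j'$ are both odd;
\item $r_{ij}\equiv 2\pmod 4$ when one of $a_i',a_j'$ is even.
\end{itemize}
The analogous statement holds for $t_{ij}$ and the $d'$'s. Note also that $c_i=a_i'+d_i'\in\Z$ for every $i$, so integrality is automatic.

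\emph{Step 2 (the main case split).} By the square condition, $[A_i\circ A_j+N]_{12}=b_ic_j+b_jc_i+m$ is divisible by $r_{ij}+t_{ij}$, which is always even.

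If $m$ is odd, pick two indices $i,j$ with $a_i',a_j',d_i',d_j'$ all odd. Such a pair exists, since at least three indices have $a'$ odd and at least three have $d'$ odd. Then $c_i,c_j$ are even, so the upper-right entry is odd, which is a contradiction.

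So assume $m\equiv 2\pmod 4$. We split according to where the (at most one) even $a'$ and even $d'$ occur.
\begin{itemize}
\item[(a)] \emph{All $a_i',d_i'$ are odd, or the even ones lie in the same matrix $A_1$.} Then every $r_{ij}+t_{ij}\equiv 0\pmod 4$: the pairs involving $A_1$ give $2+2$, and all other pairs give $0+0$. Also every $c_i$ is even, say $c_i=2\gamma_i$. The condition $4\mid b_ic_j+b_jc_i+m$ then becomes $b_i\gamma_j+b_j\gamma_i\equiv 1\pmod 2$ for all six pairs. This contradicts Lemma~\ref{lem:tech2}.
\item[(b)] \emph{$a_1'$ and $d_2'$ are even (only one even $a'$ or only one even $d'$ is handled the same way).} Here $c_1,c_2$ are odd and $c_3,c_4$ are even.
  \begin{itemize}
  \item For the pair $(1,3)$ we only know that the upper-right entry is even. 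Since $m$ is even and $c_3$ is even, this forces $b_3c_1$ even, hence $b_3$ even. The pair $(1,4)$ similarly gives $b_4$ even.
  \item For the pair $(3,4)$ we have $r_{34}+t_{34}\equiv 0\pmod 4$. This requires $b_3\gamma_4+b_4\gamma_3$ to be odd, which is impossible since $b_3,b_4$ are even.
  \end{itemize}
\end{itemize}

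The main obstacle is Step 1. It requires keeping the $2$-adic information precise enough to know the trace $r_{ij}+t_{ij}$ modulo $4$, and not just its parity, since the case $m\equiv 2\pmod 4$ needs that extra factor of $2$. Once that bookkeeping is right, the remaining cases reduce to Lemma~\ref{lem:tech2} or to a direct parity clash, as in the proofs of Propositions~\ref{prop:4k3-4n3-odd} and \ref{prop:8k1-8n5-odd}.
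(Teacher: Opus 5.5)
Your strategy is the paper's, but Step 1 contains a false claim. Squares modulo $16$ lie in $\{0,1,4,9\}$, so the condition $a_ia_j+16k+12=\square$ gives $a_ia_j\equiv 4,5,8$ or $13 \pmod{16}$, not only $4$ or $8$. Two odd entries are allowed provided $a_ia_j\equiv 5\pmod 8$; for example $1\cdot 13+12=5^2$. So the inference ``if $a_1$ were odd, then $a_2,a_3,a_4$ would all be divisible by $4$'' fails, because $a_2$ might also be odd.

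The conclusion that every $a_i$ (and every $d_i$) is even is still true, and the repair is short. Three odd integers cannot have all pairwise products $\equiv 5\pmod 8$: from $a_1a_2\equiv a_1a_3\equiv 5$ you get $a_2\equiv a_3\pmod 8$, hence $a_2a_3\equiv 1\pmod 8$. So if $a_1$ is odd, at least two of $a_2,a_3,a_4$ are even. Each such $a_j$ satisfies $a_1a_j\equiv 4$ or $8\pmod{16}$, so $4\mid a_j$. The product of two of them then gives $a_ia_j+16k+12\equiv 12\pmod{16}$, which is not a square.

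With this fix, the rest of your argument is correct and matches the paper.
\begin{itemize}
\item Your case (a) is the paper's Subcases 2a/2c. Every square root has trace $\equiv 0\pmod 4$, and Lemma~\ref{lem:tech2} is then applied to $\gamma_i=c_i/2$.
\item Your case (b) is the paper's Subcases 2b/2d. A pair involving an odd $c$ forces two of the $b$'s to be even. A pair whose square root has trace $\equiv 0\pmod 4$ then gives the contradiction.
\end{itemize}
The only real deviation is the case $m$ odd. You choose, by pigeonhole, a pair with $c_i,c_j$ both even and get an immediate parity clash. The paper instead applies Lemma~\ref{lem:tech2} to all six pairs. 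Both work: your version uses the finer mod-$4$ information from Step 1, while the paper's needs only that all $c_i$ are integers and all diagonal entries are even.
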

\begin{proof}
Assume that the four matrices in \eqref{quadruple} have the Jordan $D(N)$-property.  By the scalar $D(n)$-conditions, for $i\not= j$
$$a_i a_j+16k+12=\square,
\qquad
d_i d_j+16n+12=\square.$$
Since the quadratic residues modulo $16$ are $0,1,4,9$, a direct check modulo $16$ shows that, up to permutation,
$$(a_1,a_2,a_3,a_4)\equiv (2,2,2,2)
\quad\text{or}\quad
(0,2,2,2)\pmod4.$$
The same conclusion holds for $(d_1,d_2,d_3,d_4)$. In particular, $a_i+d_i$
 is even for every $i$, so $c_i=(a_i+d_i)/2\in\Z$.

\textbf{Case 1}: $m$ is odd

Note that we are in the same situation as Case (a) of Proposition \ref{prop:4k3-4n3-odd}. Indeed, all diagonal entries of $A_i\circ A_j+N$ are even squares, so its upper-right entry is even. Since $m$ is odd, this forces $c_ib_j+c_jb_i$ 
 to be odd for every $i<j$, and the same $\Z_2^2$-argument gives a contradiction.

\textbf{Case 2}: $m\equiv 2\pmod4$

Up to permutations of the indices and symmetry between the top and bottom diagonal entries, it suffices to consider the following four subcases.

\textbf{Subcases}:
$$\textbf{2a)}\quad \begin{matrix}
    a_i\mod 4 &:& 2&2&2 &2 \\
    d_i\mod 4 &:& 2&2&2 &2\\
\end{matrix}\,\qquad \textbf{2b)}\quad \begin{matrix}
    a_i\mod 4 &:& 2&2&2 &2 \\
    d_i\mod 4 &:& 2&2&2 &0
\end{matrix}\ $$ 
$$\textbf{2c)}\quad \begin{matrix}
    a_i\mod 4 &:& 2&2&2 &0 \\
    d_i\mod 4 &:& 2&2&2 &0
\end{matrix}\,\qquad \textbf{2d)}\quad \begin{matrix}
    a_i\mod 4 &:& 2&2&2 &0 \\
    d_i\mod 4 &:& 2&2&0 &2
\end{matrix}\ .$$ 

We analyze Subcases 2a) and 2c) simultaneously, and then Subcases 2b) and 2d).

\textbf{Subcases 2a and 2c}: In both subcases,
$$a_i+d_i\equiv0\pmod4,$$
and hence
$$ c_i\equiv0\pmod{2},\qquad i=1,2,3,4.$$
We claim that, for every pair $i<j$, every upper-triangular square root of $A_i\circ A_j+N$ has trace divisible by $4$.
 In Subcase 2a), for every $i<j$,
$$[A_i\circ A_j+N]_{11}=a_ia_j+16k+12\equiv 4+0+4\equiv0\pmod 8,$$
and similarly,
$$[A_i\circ A_j+N]_{22}=d_id_j+16n+12\equiv 0\pmod 8. $$
Since these entries are squares, they are in fact congruent to $0\pmod{16}$. Thus both corresponding diagonal entries of a square root are divisible by $4$.

In Subcase 2c), for $1\le i<j\le3$,
$$[A_i\circ A_j+N]_{11}\equiv [A_i\circ A_j+N]_{22}\equiv0\pmod8, $$
so both diagonal entries are congruent to $0\pmod{16}$. On the other hand, for $i=1,2,3$,
$$[A_i\circ A_4+N]_{11}\equiv [A_i\circ A_4+N]_{22}\equiv4\pmod8. $$
So, both are congruent to $4\pmod{16}$, and the corresponding diagonal entries of a square root are both congruent to $2\pmod4$.

Therefore, in every pair, the sum of the diagonal entries of a square root is divisible by $4$. Hence the upper-right entry of the square is divisible by $4$, and
$$ 4\mid [A_i\circ A_j+N]_{12}=c_ib_j+c_jb_i+4\ell+2, $$
where $m=4\ell+2$. Finally, we obtain 
$$c_ib_j+c_jb_i\equiv2\pmod{4},\qquad  1\le i<j\le4.$$
By putting $c_i=2e_i$, for all $i$, we get
$$e_ib_j+e_jb_i\equiv1\pmod{2},\qquad  1\le i<j\le4.$$
This is impossible by Lemma \ref{lem:tech2}.

\textbf{Subcases 2b and 2d}: In both subcases, we have
$$ c_1\equiv c_2\equiv0\pmod2,\qquad c_4\equiv1\pmod2. $$
Notice that the parity of $c_3$ is not needed in the argument.
Since the diagonal elements of $A_i\circ A_j+N$ are even, the square condition implies that 
$$ c_ib_j+c_jb_i\equiv0\pmod{2},\qquad  1\le i<j\le4.$$
In particular, for $(i,j)=(1,4)$ and $(2,4)$, since $c_1$ and $c_2$ are even, and $c_4$ is odd,  we get 
$$ c_1b_4+c_4b_1\equiv0\pmod{2}~ \implies b_1\equiv0\pmod2,$$
$$ c_2b_4+c_4b_2\equiv0\pmod{2}~ \implies b_2\equiv0\pmod2.$$
Hence,
\begin{equation*}\label{cibi}
    c_1b_2+c_2b_1\equiv0\pmod{4}.
\end{equation*}
For the pair $(1,2)$, both diagonal entries are congruent to $0\pmod{16}$, so the diagonal entries of a square root are divisible by $4$ and therefore
$$ c_1b_2+c_2b_1\equiv2\pmod{4},$$
contradicting the previous congruence.
\end{proof}

\begin{rem}
Proposition~\ref{prop:s2-m-t2} settles positively the
square-diagonal subfamily of the remaining class
$(p,q)\equiv(4,4)\pmod{16}$ with $4\nmid m$.
\end{rem}

In the next proposition we give a family of Jordan $D(N_t)$-quadruples for which $N_t$ is representable as a difference of two squares when $t$ is even, but is not representable when $t$ is odd. It follows immediately from Example \ref{exa.duje} and Lemma \ref{lem:scale-m} by scaling all
upper-right entries by $t$.

\begin{prop}\label{prop:4-2t-4}
For every integer $t\not =0$, the matrices
\[N_t=\begin{pmatrix}4&2t\\0&4\end{pmatrix},\]
\[A_t=\begin{pmatrix}0&t\\0&0\end{pmatrix},\quad
B_t=\begin{pmatrix}0&2t\\0&12\end{pmatrix},\quad
C_t=\begin{pmatrix}3&2t\\0&1\end{pmatrix},\quad
D_t=\begin{pmatrix}7&t\\0&5\end{pmatrix}\]
form an upper-triangular Jordan $D(N_t)$-quadruple.
\end{prop}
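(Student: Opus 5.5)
The plan is to derive the proposition directly from Example~\ref{exa.duje} together with the scaling Lemma~\ref{lem:scale-m}. In Example~\ref{exa.duje} we have $N=N_1$ with upper-right entry $r=2$, and the quadruple there is $\{A,B,C,D\}$. Applying Lemma~\ref{lem:scale-m} with factor $m=t\neq0$ multiplies every upper-right entry of $A,B,C,D$ by $t$ and multiplies $r=2$ by $t$. This produces exactly $A_t,B_t,C_t,D_t$ and $N_t=\begin{pmatrix}4&2t\\0&4\end{pmatrix}$. So the first step is to check that the example is correct, and the second is to invoke the lemma.

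For the verification, I will use the formula for the Jordan product in $UT_2(\Z)$. For each pair I compute the diagonal entries $a_ia_j+4$ and $d_id_j+4$ and the upper-right entry $\tfrac12\bigl(b_j(a_i+d_i)+b_i(a_j+d_j)\bigr)+2$. I then compare the result with the square of the stated root using \eqref{kvadrat}. Take for instance $B\circ C+N$. The diagonal is $(4,16)$, and the upper-right entry is $\tfrac12(2\cdot12+2\cdot4)+2=18$. This equals $3\cdot(2+4)$, which is the upper-right entry of $\begin{pmatrix}2&3\\0&4\end{pmatrix}^2$. The other five pairs are treated the same way.

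It is also worth giving the scaled square roots explicitly, namely $R_{ij}^{(t)}=\begin{pmatrix}r_{ij}&tz_{ij}\\0&t_{ij}\end{pmatrix}$. This makes the proof self-contained and shows directly that each $A_t\circ B_t+N_t$, and the others, is a square with integer entries. In particular, the Jordan products stay integral because every upper-right entry is simply multiplied by $t$.

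Finally, I will check that the quadruple is a genuine one. The matrices must be distinct and nonzero for every $t\neq0$, which already follows from Lemma~\ref{lem:scale-m}. Directly: $A_t\neq0$ because $t\neq0$; the diagonal pairs $(0,0),(0,12),(3,1),(7,5)$ are pairwise distinct; and $B_t,C_t,D_t$ have nonzero diagonal. No step here is a real obstacle. The only place where care is needed is the arithmetic in the six verifications for the base example, since the lemma only transfers correctness and cannot create it.
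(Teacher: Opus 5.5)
Your proposal is correct and follows the paper's own route exactly. The paper obtains this proposition immediately from Example~\ref{exa.duje} by applying Lemma~\ref{lem:scale-m} with scaling factor $t$.

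Your extra checks are accurate and only make the argument more self-contained. These are the six base verifications, the explicit scaled roots $R_{ij}^{(t)}$, and distinctness and nonzeroness via the distinct diagonal pairs and $t\neq0$.
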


\subsection{Case IV} $(p,q)\equiv(4,12),(8,8),(12,4)\pmod{16}$, $r\equiv1\pmod2$.

\begin{prop}\label{prop:16k4-odd-16m12} Let
$$N=\begin{pmatrix}16k+4&2m+1\\0&16n+12\end{pmatrix} \text{ or } N=\begin{pmatrix}16k+12&2m+1\\0&16n+4\end{pmatrix} \in UT_2(\Z). $$  
Then there are no four matrices in $UT_2(\mathbb Z)$ having the Jordan $D(N)$-property.
Consequently, there is no upper-triangular Jordan $D(N)$-quadruple.
\end{prop}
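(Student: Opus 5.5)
We treat the first matrix $N=\begin{pmatrix}16k+4&2m+1\\0&16n+12\end{pmatrix}$; the second is the same argument with the roles of the top and bottom diagonal entries exchanged. Assume the four matrices in \eqref{quadruple} have the Jordan $D(N)$-property. The proof combines the three ingredients listed above: the scalar $D(p)$/$D(q)$ conditions, integrality, and the square condition.

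\textbf{Step 1 (scalar conditions).} Since $p\equiv q\equiv 4\pmod 8$ and $a_ia_j+p$ must be a square, two odd entries $a_i,a_j$ must satisfy $a_ia_j\equiv 5\pmod 8$ (odd squares are $\equiv1\pmod8$). Three odd entries are then impossible: from $a_1a_2\equiv a_1a_3\equiv5\pmod8$ we get $a_2\equiv a_3\pmod 8$, hence $a_2a_3\equiv a_2^2\equiv1\pmod8$, a contradiction. So at most two of the $a_i$ are odd, and likewise at most two of the $d_i$. If needed, I would refine this modulo $16$, using that the quadratic residues mod $16$ are $0,1,4,9$, to control the even entries (as in Proposition~\ref{prop:16k12-16n12-m}). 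Mainly, though, I need the following observation. If $a_i,a_j$ are both even, then $a_ia_j+p$ is an even square, so the corresponding diagonal entry of any square root is even. The same holds for the $d$'s.

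\textbf{Step 2 (integrality forces even $b$'s).} Call an index $i$ \emph{good} if $a_i+d_i$ is even, i.e. $c_i\in\Z$, and \emph{bad} otherwise. Integrality of $[A_i\circ A_j]_{12}=\tfrac12\bigl((a_i+d_i)b_j+(a_j+d_j)b_i\bigr)$ gives two facts:
\begin{itemize}
\item if $i$ is good and $j$ is bad, then $b_i$ is even;
\item if $i,j$ are both bad, then $b_i\equiv b_j\pmod 2$.
\end{itemize}

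\textbf{Step 3 (case analysis).} This is where the main work lies.

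If all $a_i$ and $d_i$ are even, all $c_i\in\Z$ and every diagonal entry of $A_i\circ A_j+N$ is an even square. Hence every square root has even trace, so the upper-right entry $b_ic_j+b_jc_i+2m+1$ must be even. This forces $b_ic_j+b_jc_i\equiv1\pmod2$ for all $i<j$, which Lemma~\ref{lem:tech2} rules out.

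Otherwise I aim to find two indices $i,j$ with $a_i,a_j,d_i,d_j$ all even and with $b_i,b_j$ even. Such indices are good, and the second condition comes from Step 2 as soon as a bad index exists. For such a pair, $A_i\circ A_j+N$ has even diagonal, so the square condition makes its upper-right entry even. But that entry is $b_ic_j+b_jc_i+2m+1\equiv1\pmod2$, a contradiction.

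When at most one of $a_i+d_i$ is odd, or the odd $a$'s and odd $d$'s sit in the same indices, Step 1 leaves at least two indices with all four diagonal entries even. The genuinely delicate configuration is when the odd $a$'s and odd $d$'s occupy different indices. For example, $a_1,a_2$ odd and $d_3,d_4$ odd leaves no index with both diagonal entries even. I expect this to be the main obstacle.

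There I would first exploit Step 1 more fully: the odd $a$'s are congruent mod $8$ up to the relation $a_1a_2\equiv5\pmod8$. For an odd $a_k$ and even $a_i$, the condition $a_ia_k+4\equiv\square\pmod{16}$ pins $a_i\pmod 4$, and similarly for the $d$'s. This should yield parity or $2$-adic information on the traces $\sqrt{a_ia_j+p}\pm\sqrt{d_id_j+q}$ of the square roots for the mixed pairs, such as $(1,3)$.

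Then I would combine the bad-index rule $b_i\equiv b_j\pmod 2$ with the square condition applied to a pair whose square root has even trace. Such a pair has both diagonal entries of the same parity, e.g. $(1,2)$, where both diagonal entries are odd. This should again force $b_ic_j+b_jc_i$ odd in a way incompatible with $m$ odd, either directly or after a $\Z_2^2$-type count as in Lemma~\ref{lem:tech2} applied to half-integral $c_i$ scaled by $2$. If that fails, I would fall back on a full residue enumeration mod $16$ for $(a_i,d_i)$, as in Proposition~\ref{prop:8k1-8n5-odd}.
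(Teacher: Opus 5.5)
Step~1 only gives ``at most two odd entries on each diagonal''. That uses nothing beyond $p\equiv q\equiv 4\pmod 8$. You then leave the configuration with $a_1,a_2$ odd and $d_3,d_4$ odd as an unresolved ``main obstacle''. This is a genuine gap, and your sketched plan for it cannot succeed. Take $N=\begin{pmatrix}4&1\\0&4\end{pmatrix}$: both diagonal entries are $\equiv 4\pmod{16}$ and the upper-right entry is odd. The construction in the proof of Proposition~\ref{prop:s2-m-t2} with $s=t=2$, $X=Y=0$ gives the Jordan $D(N)$-quadruple
\[
\begin{pmatrix}0&-1\\0&1\end{pmatrix},\quad
\begin{pmatrix}1&-1\\0&0\end{pmatrix},\quad
\begin{pmatrix}5&15\\0&12\end{pmatrix},\quad
\begin{pmatrix}12&15\\0&5\end{pmatrix}.
\]
Its diagonals realize exactly your delicate pattern: odd $a$'s at indices $2,3$, odd $d$'s at $1,4$, every index bad. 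Every fact you derived (Step~1, the integrality rules of Step~2, mod-$2$ square conditions, $r$ odd) uses only $p\equiv q\equiv 4\pmod 8$, and all of them hold for this example. So any treatment of that configuration that ignores the difference between $q\equiv 12$ and $q\equiv 4\pmod{16}$ is bound to fail; the configuration must be excluded outright.

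Two further slips. In that configuration the pair $(1,2)$ does not have both diagonal entries odd: $d_1d_2+q$ is even, so its square root has odd trace. The even-trace pairs are the mixed ones, where all $c_i$ are half-integers. Also, Step~3 as written misses patterns where every $a_i+d_i$ is even but some entries are odd, because Step~2 gives even $b$'s only when a bad index exists. Those cases need Lemma~\ref{lem:tech2}, together with the fact that all square-root traces are then even.

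The missing observation is the one behind the paper's claim $(d_1,\dots,d_4)\equiv(2,2,2,2)$ or $(0,2,2,2)\pmod 4$: when $q\equiv 12\pmod{16}$, all $d_i$ are even. Suppose some $d_i$ is odd.
\begin{itemize}
\item For each even $d_j$, the number $d_id_j+q$ is an even square, hence divisible by $4$, so $4\mid d_j$.
\item Two entries $d_j,d_k\equiv 0\pmod 4$ would give $d_jd_k+q\equiv 12\pmod{16}$, which is not a square.
\item So there is at most one even $d$, and by your Step~1 at most two odd ones, which is impossible for four entries.
\end{itemize}
Your remark that the mod-$16$ condition ``pins'' even entries mod $4$ was the right ingredient. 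It has to be pushed to this exclusion rather than used for trace information.

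Once all $d_i$ are even, your own argument closes. If all $a_i$ are even, apply Lemma~\ref{lem:tech2}. Otherwise an odd $a_i$ makes index $i$ bad. Since at most two $a_i$ are odd, two indices have all diagonal entries even; their $b$'s are even by Step~2, and you get your parity contradiction. That two-case finish is shorter than the paper's five-bullet enumeration of residue patterns mod $4$. Without the evenness of the $d_i$, however, the proposal does not prove the statement.
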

\begin{proof}
   Assume that the four matrices in \eqref{quadruple} have the Jordan $D(N)$-property. The scalar $D(n)$-conditions give 
$$(a_1, a_2, a_3, a_4) \equiv (0, 0, 0, 0) \text{ or } (0, 0, 0, \rho) \text{ or } (0, 0, \rho, \rho)\pmod 4, \rho\in\{1,2,3\},$$
$$(d_1, d_2, d_3, d_4) \equiv (2, 2, 2, 2) \text{ or } (0, 2, 2, 2) \pmod 4.$$

We have several cases to discuss.
\begin{itemize}
    \item If all $a_i$ are even, then all $a_i+d_i$ are even, and the same $\Z_2^2$--argument as in Case (a) of Proposition \ref{prop:4k3-4n3-odd} yields a contradiction.
    
 \item Assume that $a_i\equiv d_i+2\equiv 0\pmod{4}$ for $i=1,2,3$, i.e.
$$\begin{matrix}
    a_i\mod 4 &:& 0&0&0 &* \\
    d_i\mod 4 &:& 2&2&2 &*\\
\end{matrix}$$
where $*$ denotes an entry that is not relevant to the argument. Define
$$ c_i=\frac12(a_i+d_i)\equiv1\pmod2, ~ i=1,2,3. $$
Then for $1\le i<j\le 3$ we have
$$ [A_i\circ A_j+N]_{12}=b_i c_j+b_j c_i+2m+1\equiv b_i+b_j+1. $$
By the square condition (since diagonal elements are even), we get
$$ b_1+b_2\equiv 1\pmod{2},~~ b_1+b_3\equiv 1\pmod{2},~~ b_2+b_3\equiv 1\pmod{2},$$
which is not possible.   
\item Assume $\rho$ is odd and 
$$\begin{matrix}
    a_i\mod 4 &:& 0&0&0 &\rho \\
    d_i\mod 4 &:& 0&2&2 &2\\
\end{matrix}.$$
Hence, with $c_i$ defined as above: 
$$ c_1\equiv 0\pmod2,\qquad c_2\equiv c_3\equiv 1\pmod2.$$
The  square condition (since diagonal elements are even) for the pair of indices (1,2) implies:
$$ b_1\equiv 1\pmod2.  $$
On the other hand the integrality condition for indices (1,4)  forces
$$ b_1(a_4+d_4)+b_4(a_1+d_1)\equiv b_1\equiv 0\pmod2,$$
a contradiction.
\item Assume $\rho$ is odd and 
$$\begin{matrix}
    a_i\mod 4 &:& 0&0&\rho &\rho \\
    d_i\mod 4 &:& 2&2&2 &*\\
\end{matrix}.$$
Hence, the $c_i$ defined as in the previous case satisfy 
$$ c_1\equiv c_2\equiv 1\pmod2.$$
The  square condition for the pair of indices (1,2) (since diagonal elements are even)  implies:
$$ b_1+b_2\equiv 1\pmod{2}.$$
The integrality condition for the pairs $(1,3)$ and $(2,3)$ gives
$$ b_1(a_3+d_3)+b_3(2c_1)\equiv b_1\equiv 0\pmod2,$$
$$ b_2(a_3+d_3)+b_3(2c_2)\equiv b_2\equiv 0\pmod2.$$
Therefore, $b_1+b_2\equiv 0\pmod2$, a contradiction.
\item Assume $\rho$ is odd and 
$$\begin{matrix}
    a_i\mod 4 &:& 0&0&\rho &\rho \\
    d_i\mod 4 &:& 0&2&2 &2\\
\end{matrix}.$$
We have 
$$ c_1\equiv 0\pmod2,\qquad c_2\equiv1\pmod2.$$
The integrality condition for indices (1,3) gives
$$ b_1(a_3+d_3)+b_3(a_1+d_1)\equiv b_1\equiv 0\pmod2.$$
The  square condition for  indices (1,2) implies:
$$ b_1c_2+b_2c_1+2m+1\equiv 0\pmod2,$$
but the LHS is odd, a contradiction.
\end{itemize}

\end{proof}

\begin{prop}\label{prop:16k8-odd-16m8} Let
$$N=\begin{pmatrix}16k+8&2m+1\\0&16n+8\end{pmatrix}\in UT_2(\Z). $$ 
Then there are no four matrices in $UT_2(\mathbb Z)$ having the Jordan $D(N)$-property.
Consequently, there is no upper-triangular Jordan $D(N)$-quadruple.
\end{prop}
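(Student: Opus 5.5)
The plan is to combine the three ingredients (scalar $D(n)$-conditions, integrality, square condition) with Lemma~\ref{lem:tech2}, as in Propositions~\ref{prop:4k3-4n3-odd} and \ref{prop:8k1-8n5-odd}. The argument starts from a mod~$16$ analysis of the diagonal entries, which for $p\equiv q\equiv 8\pmod{16}$ is quite restrictive. Assume the four matrices in \eqref{quadruple} have the Jordan $D(N)$-property.

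\textbf{Step 1 (diagonal residues).} Since the squares modulo $16$ are $0,1,4,9$, the condition $a_ia_j+16k+8=\square$ forces $a_ia_j\equiv 8,9,12,1\pmod{16}$. So an odd $a_i$ can only be paired with an odd $a_j$ (product $\equiv1\pmod 8$) or with an even $a_j\equiv0\pmod4$ (product $\equiv 8,12\pmod{16}$). For two even entries I would check:
\begin{itemize}
\item two entries $\equiv 0\pmod 4$ give a product $\equiv0\pmod{16}$, which is impossible, so at most one $a_i\equiv0\pmod4$;
\item two entries $\equiv2\pmod4$ work only as a pair $\{2,6\}\pmod 8$ (product $\equiv12$), so at most two such entries;
\item a $2\pmod4$ entry paired with a $0\pmod 4$ entry works only if the latter is $\equiv4\pmod8$.
\end{itemize}
Hence at most three $a_i$ are even, so some $a_i$ is odd. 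But then every even $a_j$ must be $\equiv0\pmod4$, and there is at most one such. Conclusion: at most one $a_i$ is even, and likewise at most one $d_i$ is even.

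\textbf{Step 2 (balanced and unbalanced indices).} Call $i$ \emph{balanced} if $a_i\equiv d_i\pmod 2$, i.e.\ $c_i\in\Z$, and \emph{unbalanced} otherwise. By Step~1 at most two indices are unbalanced, so at least two are balanced.

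For two balanced indices $i,j$ we have $a_ia_j\equiv d_id_j\pmod2$. The two diagonal entries of $A_i\circ A_j+N$ then have the same parity, so the diagonal entries $x,y$ of any upper-triangular square root satisfy $x\equiv y\pmod 2$. The trace $x+y$ is therefore even, and the square condition forces the upper-right entry $b_ic_j+b_jc_i+2m+1$ to be even. Equivalently,
$$b_ic_j+b_jc_i\equiv1\pmod2 .$$

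\textbf{Step 3 (conclusion).} If all four indices are balanced, this congruence holds for all $1\le i<j\le4$, which contradicts Lemma~\ref{lem:tech2}.

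Otherwise let $u$ be an unbalanced index and $j$ a balanced one. Integrality of $[A_u\circ A_j]_{12}=\frac12\bigl(b_u(a_j+d_j)+b_j(a_u+d_u)\bigr)$ requires $b_j(a_u+d_u)$ to be even. Since $a_u+d_u$ is odd, $b_j$ is even. Thus every balanced $b_j$ is even. Taking two balanced indices $i,j$ gives $b_ic_j+b_jc_i\equiv0\pmod 2$, contradicting Step~2.

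The main obstacle is the bookkeeping in Step~1: we must exclude three or four even diagonal entries, including the degenerate configurations allowed by the Jordan $D(N)$-property (zeros and repetitions). I would carry this out by a short explicit table of even residues modulo $16$.
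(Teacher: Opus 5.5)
Your proof is correct and follows the paper's route. You use a mod-$16$ analysis to show that at most one $a_i$ and at most one $d_i$ is even, and then the integrality and square conditions together with Lemma~\ref{lem:tech2} give the parity contradictions. Your balanced/unbalanced dichotomy compactly repackages the paper's cases (a)--(d2) from Proposition~\ref{prop:4k3-4n3-odd}, and your Step~1 spells out residue bookkeeping that the paper only asserts, for instance that the pairwise-admissible even triple $\{2,6,4\}\pmod 8$ cannot be extended to four entries.
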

\begin{proof}
Since
$$ a_ia_j+8\equiv \square,~  d_id_j+8 \equiv \square \pmod{16},$$
    the only possible residue patterns modulo 4 for $a_i$'s and $d_i$'s, up to permutation, are 
$$ (1,1,1,1), ~ (3,3,3,3), ~ (0,1,1,1), ~ (0,3,3,3).$$
Thus, among the numbers $a_1,\ldots,a_4$, at most one is even, and the same holds for $d_1,\ldots,d_4$. Consequently, exactly the same four parity cases arise as in Proposition~\ref{prop:4k3-4n3-odd}. From this point on, the argument depends only on the integrality condition, the square condition for the $(1,2)$-entries of the Jordan products, and the fact that the $(1,2)$-entry of $N$ is odd. For pairs of odd diagonal entries, the corresponding diagonal square roots are both odd, so their sum is even. Hence, the square-root trace is even, as required in the proof of  Proposition~\ref{prop:4k3-4n3-odd}, and the same parity contradictions are obtained. 
\end{proof}

\subsection{Summary and remaining cases}
\begin{center}
\begin{table}[h!]    
\begin{tabular}{c|c|c}
Case & Status & Remaining open cases\\ \hline \hline
I & completely solved & -- \\ \hline

II & partially solved &
 $(p,q)\equiv(1,1)\pmod8$, $r\equiv1\pmod2$ \\ \hline

III & partially solved &
$(p,q)\equiv(4,4)\pmod{16}$,
$r\equiv1,2,3\pmod4$ \\ \hline

IV & completely solved & --
\end{tabular}
\caption{Status of the  non-representable congruence cases}
    \label{tab1}
\end{table}\end{center}

The entries in the last column indicate congruence classes that are not completely classified, although some infinite existence subfamilies within these classes are known. Proposition~\ref{prop:s2-m-t2} shows, in particular, that the square-diagonal subfamilies of both residual classes are completely settled positively. Further infinite families of Jordan $D(N)$-quadruples in these residual classes are provided by Propositions~\ref{prop:quadruple-1m1} and~\ref{prop:4-2t-4}.

To summarize, within Cases II and III, both existence and non-existence phenomena occur. However, the residual congruence classes listed in the table remain open.

\section{Polynomial constructions of Jordan $D(N)$-quadruples}

Although the existence of such quadruples already follows from Corollary~\ref{cor:ex-of-quadruples}, we include the following direct construction to illustrate how classical polynomial formulas from \cite{duje-graz96} for Diophantine quadruples can be adapted to the Jordan setting. The main difference is that, besides matching the diagonal entries, one has to choose the upper-right entries so that the corresponding square conditions are satisfied. In the example below, this requirement reduces to the solvability of a linear Diophantine equation.

We give an explicit construction for 
$$N=\begin{pmatrix}
    4k+3& 2m\\ 0& 4n+3
\end{pmatrix}.$$
We seek a Jordan $D(N)$-pair of the form 
    $$ \left\{A=\begin{pmatrix}
        1& b\\ 0& 1 
    \end{pmatrix},B=\begin{pmatrix}
        9k^2+8k+1 & y\\ 0& 9n^2+8n+1
    \end{pmatrix}\right\}.$$
Note that the diagonal entries belong to the polynomial quadruple  \eqref{poli.formula} (for $\ell=1$). Following the construction of Dujella \cite{duje-graz96}, we extend this pair to a family with the Jordan $D(N)$-property by setting
    $$ C=A+B+2R, $$
    where
    $$ A\circ B+N=\begin{pmatrix}
        (3k+2)^2&\frac{b}{2}(2+8k+9k^2+8n+9n^2)+y+2m\\0&(3n+2)^2
    \end{pmatrix}.$$
In order that $R=\begin{pmatrix}
    3k+2&w\\0&3n+2
\end{pmatrix}$ satisfy $R^2
=A\circ B+N$, its upper-right entry $w$ must satisfy
$$ -\frac{b}{2}(2+8k+9k^2+8n+9n^2)-y+w(4+3k+3n)=2m.$$
Thus, 
\begin{eqnarray*}
    y&=&w(4+3k+3n)-\frac{b}{2}(2+8k+9k^2+8n+9n^2)-2m,
    \end{eqnarray*}
$w,b\in\Z$. If $k+n\equiv 1\pmod2$, then $b$ has to be even. We now extend the pair $\{B,C\}$ by setting
$$ D=A+4B+4R.$$
Hence, $(A,B,C,D)$ has the Jordan $D(N)$-property if and only if
$$ A\circ D+N=\square.$$
We have

  $$\frac12(AD+DA)+N=\begin{pmatrix}
    4 (2 + 3 k)^2&
 b(14  + 22  k + 18  k^2  + 22  n + 18  n^2) + 4 w + 4 y+2m\\0&
 4 (2 + 3 n)^2
\end{pmatrix}.$$
This matrix is the square of
$$\begin{pmatrix}
    2 (2 + 3 k)&
 z\\0&
 2(2 + 3 n)
\end{pmatrix} $$
if and only if the Diophantine equation
$$ b(5+3k+3n)+2w(5+3k+3n)-z(4+3k+3n)=3m$$
is solvable in $b,w,z$. 
Since $$\gcd(5+3k+3n,4+3k+3n)=1,$$
the equation is solvable. 
A general solution of the previous Diophantine equation is
\begin{eqnarray*}
    z&=&(5+3k+3n)t+3m\\
    b&=&(4+3k+3n)t+3m-2w,
\end{eqnarray*}
where $t,w\in\Z$. Note that  $b$ has to be even if  $k+n\equiv1\pmod 2$, but this can be achieved by choosing $t\equiv m\pmod2$. Hence there are infinitely many integer pairs $(t,w)$ satisfying the required parity condition.

It remains to ensure that the matrices $A,B,C,D$ are pairwise distinct.
Their diagonal entries are obtained from the four polynomials
$$f_1(x)=1,\ 
f_2(x)=9x^2+8x+1,\ 
f_3(x)=9x^2+14x+6,\ 
f_4(x)=36x^2+44x+13.$$
For integer $x$, the only equalities among these polynomials are
$$f_1(x)=f_2(x)\iff x=0,
\qquad
f_1(x)=f_3(x)\iff x=-1.$$
Consequently, equality between two of the matrices can occur only in the
cases $A=B$ with $k=n=0$, or $A=C$ with $k=n=-1$.

If $k=n=0$, then
\[A=B\iff b=y\iff w=t+m,\]
whereas, if $k=n=-1$, then
\[A=C\iff y+2w=0\iff t+w=2m.\]
Thus, in each exceptional case, equality occurs only on one affine
line in the $(t,w)$-parameter space. Avoiding this line still leaves
infinitely many integer pairs $(t,w)$. Furthermore, all four matrices
are nonzero, since none of the polynomials $f_1,f_2,f_3,f_4$ has an
integer zero. Therefore, $t$ and $w$ can be chosen so that
$A,B,C,D$ are pairwise distinct and nonzero. Hence they form a
Jordan $D(N)$-quadruple.

\medskip

Together with the prototype construction from Theorem~\ref{thm:quadruple_for_diff_sq}, this demonstrates two complementary approaches to constructing Jordan D(N)-quadru-ples in $UT_2(\Z)$: one based on representations as differences of two squares, and one based on adaptations of classical polynomial formulas. The same method {\color{black}may also be applicable} to other polynomial families of classical $D(n)$-quadruples.

\section{Acknowledgments}
The authors acknowledge support from:
\begin{itemize}
    \item[-] the Croatian Science Foundation under the
project no. IP-2022-10-5008 (TEBAG),
\item[-] the
project “Implementation of cutting-edge research and its application as part of the
Scientific Center of Excellence for Quantum and Complex Systems, and Representations of Lie Algebras”, Grant No. PK.1.1.10.0004, co-financed by the European
Union through the European Regional Development Fund – Competitiveness and
Cohesion Programme 2021--2027,
\item[-] the European Union:
NextGenerationEU through the National Recovery and Resilience Plan 2021--2026, Institutional grant of University of Zagreb Faculty of Science (IK IA 1.1.3. Impact4Math).
\end{itemize}

The authors acknowledge the use of ChatGPT (OpenAI) during the early stages of this work. In particular, discussions with ChatGPT helped inspire the initial idea leading to the construction presented in Theorem~\ref{thm:quadruple_for_diff_sq}. The mathematical development, proofs, verification of results, and final presentation are the authors' own.

The authors would like to thank Professors  Tomislav Pejkovi\' c and Matija Kazalicki for carefully reading the manuscript and for their useful comments and suggestions.

\end{document}